\theoremstyle{plain}
\newtheorem{theorem}{Theorem}[section]
\newtheorem{corollary}{Corollary}[section]
\newtheorem{lemma}{Lemma}[section]
\theoremstyle{definition}
\theoremstyle{remark}
\newcommand{\C}{\mathbb C}
\newcommand{\Z}{\mathbb Z}
\newcommand{\VI}{{\rm\scriptscriptstyle VI}}
\newcommand{\V}{{\rm\scriptscriptstyle V}}
\newcommand{\PV}{${\rm P}_{\rm V}\;$}
\newcommand{\PVI}{${\rm P}_{\rm VI}\;$}
\newcommand{\pII}{${\rm P}_{\rm II}\;$}
\newcommand{\PIII}{${\rm P}_{\rm III}\;$}
\newcommand{\half}{
        {\lower0.00ex\hbox{\raise.6ex\hbox{\the\scriptfont0 1}
                           \kern-.5em\slash\kern-.1em\lower.45ex
                                     \hbox{\the\scriptfont0 2}}}}
\newcommand{\quarter}{
        {\lower0.00ex\hbox{\raise.6ex\hbox{\the\scriptfont0 1}
                           \kern-.5em\slash\kern-.1em\lower.45ex
                                     \hbox{\the\scriptfont0 4}}}}
\newcommand{\tquarter}{
        {\lower0.00ex\hbox{\raise.6ex\hbox{\the\scriptfont0 3}
                           \kern-.5em\slash\kern-.1em\lower.45ex
                                     \hbox{\the\scriptfont0 4}}}}
\newcommand{\eighth}{
        {\lower0.00ex\hbox{\raise.6ex\hbox{\the\scriptfont0 1}
                           \kern-.5em\slash\kern-.1em\lower.45ex
                                     \hbox{\the\scriptfont0 8}}}}
\newcommand{\othird}{
        {\lower0.00ex\hbox{\raise.6ex\hbox{\the\scriptfont0 1}
                           \kern-.5em\slash\kern-.1em\lower.45ex
                                     \hbox{\the\scriptfont0 3}}}}
\begin{document}

\title[Boundary Conditions for Scaled Random Matrix Ensembles]
{Boundary Conditions for Scaled Random Matrix Ensembles in the Bulk of the Spectrum}

\author{A.V.~Kitaev}
\address{Steklov Mathematical Institute\\Fontanka 27\\St.\,Petersburg, 191011\\Russia}
\email{\tt kitaev@pdmi.ras.ru}

\author{N.S.~Witte}
\address{Department of Mathematics and Statistics\\University of Melbourne\\Victoria 3010, Australia}
\email{\tt n.witte@ms.unimelb.edu.au}

\begin{abstract}
A spectral average which generalises the local spacing distribution of the 
eigenvalues of random $ N\times N $ hermitian matrices in the bulk of
their spectrum as $ N\to\infty $ is known to be a $\tau$-function of the 
fifth Painlev\'e system. This $\tau$-function, $ \tau(s) $, has generic parameters 
and is transcendental but is characterised by particular boundary conditions 
about the singular point $s=0$, which we determine here. When the average 
reduces to the local spacing distribution we find that $\tau$-function
is of the separatrix, or partially truncated type.
\end{abstract}

\subjclass[2000]{05E35, 39A05, 37F10, 33C45, 34M55}
\maketitle

\section{Motivations}
\setcounter{equation}{0}
Recent studies \cite{FW_2001a,FW_2002a,FW_2004} have revealed that a generalised 
spectral average for the universality classes of
scaled random hermitian matrix ensembles - the bulk, hard edge and soft edge
classes for $ \beta=2 $ - are determined by general transcendental solutions of the 
Painlev\'e equations \PV, \PIII and \pII respectively. These works unified 
earlier studies \cite{JMMS_1980},\cite{TW_1994},\cite{TW_1994a},\cite{TW_1994b}
in the sense that this average includes local eigenvalue 
spacing distributions and moments of the characteristic polynomials as special
cases, and extended them in that the parameter sets of the Painlev\'e equations were 
exhausted.
As a consequence of this identification the logarithmic derivatives of the
spectral averages are Jimbo-Miwa-Okamoto $\sigma$-functions satisfying
second-order second-degree ordinary differential equations.
It was noted in the first mentioned works, that while identifying the precise parameters of
the Painlev\'e transcendents involved, the algebraic approach employed could not
uniquely specify adequately the boundary conditions that the particular 
solutions to the differential equations should satisfy. It is the purpose of
this paper to rectify this for the bulk scaling case and completely 
determine the boundary conditions for the differential equations of the 
$\sigma$-form.

The plan of our work is as follows - in Section 2 we review the isomonodromic 
formulation of \PVI for generic values of the parameters, give a specific parameterisation
of its monodromy matrices and the local expansion of the $ \tau$-function about
the fixed singularity $ t=0 $. In Section 3 we give the corresponding expansion
for a generalised spectral average of the circular unitary ensemble, the  
{\it spectrum singularity ensemble}, and identify
its monodromy data by comparison with the previous section. In
Section 4 we review the necessary isomonodromic formulation for \PV in a parallel
manner to the treatment of \PVI in Section 2. We take the bulk 
scaling limit of our spectrum singularity average via a formal argument in Section 5 and 
identify the resulting monodromy data applying in this case. We show that this data 
is consistent with the rigorous theory of the limit transition from \PVI to \PV as 
developed in Kitaev \cite{Ki_1994}.
We wish to advise the reader that we re-use the same symbols in the context of
the monodromy theory of \PVI as for that of \PV where there is no risk of
confusion in order not to burden the notation unnecessarily. Where both are 
discussed together then we make a notational distinction.

\section{Isomonodromy Deformation Formulation for \PVI}
\setcounter{equation}{0}
Following the conventions and notations of \cite{Ji_1982}, \cite{Ki_1994} we consider the
Lax pair of linear $2\times 2$ matrix ODEs for $ \Psi(\lambda;t) $
with four regular singularities in the $\lambda$-plane, denoted by $ \nu \in\{0,t,1,\infty\} $
\begin{equation}
   \frac{d}{d\lambda}\Psi 
  = \left( \frac{A_{0}}{\lambda}+\frac{A_{1}}{\lambda-1}+\frac{A_{t}}{\lambda-t}
      \right) \Psi , \qquad
   \frac{d}{dt}\Psi 
  = -\frac{A_{t}}{\lambda-t} \Psi .
\label{PVI_linear}
\end{equation}
We adopt the convention that the residue matrices $ A_{\nu} $ satisfy
\begin{equation}
   A_{0}+A_{t}+A_{1}=-A_{\infty}= -\frac{\theta_{\infty}}{2}\sigma_3, \quad
   \sigma_3 := \left( \begin{array}{cc} 1 & 0 \\ 0 & -1 \end{array} \right), \quad
   \theta_{\infty} \in \C\backslash\Z,
\label{PVI_Ainfty}
\end{equation}
and the constraints $ {\rm tr} A_{\nu} = 0 $, 
$ \det A_{\nu} = -\frac{1}{4}\theta^2_{\nu} $, $ \nu \in\{0,t,1\} $,
defining the formal exponents of monodromy $ \theta_{\nu} $.
The {\it $\tau$-function} for \PVI is defined as
\begin{equation}
   \frac{d}{dt}\log\tau
   = {\rm Tr}\left(\frac{A_{0}}{t}+\frac{A_{1}}{t-1} \right)A_{t} ,
\label{PVI_tau}
\end{equation}
and the {\it $\sigma$-function} as
\begin{equation}
   \zeta(t) =
   t(t-1)\frac{d}{dt}\log\tau
   +\frac{1}{4}(\theta^2_{t}-\theta^2_{\infty})t
   -\frac{1}{8}(\theta^2_{t}+\theta^2_{0}-\theta^2_{\infty}-\theta^2_{1}) ,
\label{PVI_sigma}
\end{equation}
which satisfies the second-order second degree differential equation
\begin{multline}
 \frac{d}{dt}\zeta\left(t(t-1)\frac{d^2}{dt^2}\zeta\right)^2 \\
 +\left[ 2\frac{d}{dt}\zeta\left(t\frac{d}{dt}\zeta-\zeta\right)
         -\left(\frac{d}{dt}\zeta\right)^2
         -\frac{1}{16}(\theta^2_{t}-\theta^2_{\infty})(\theta^2_{0}-\theta^2_{1})  
  \right]^2 \\
 =\left(\frac{d}{dt}\zeta+\frac{1}{4}(\theta_{t}+\theta_{\infty})^2\right)
  \left(\frac{d}{dt}\zeta+\frac{1}{4}(\theta_{t}-\theta_{\infty})^2\right) \\
  \times
  \left(\frac{d}{dt}\zeta+\frac{1}{4}(\theta_{0}+\theta_{1})^2\right)
  \left(\frac{d}{dt}\zeta+\frac{1}{4}(\theta_{0}-\theta_{1})^2\right) .
\label{PVI_SF}
\end{multline}
Furthermore we suppose that the matrices $ A_{\nu}, \nu\in \{0,t,1\} $ are 
diagonalisable, i.e. that there 
exist invertible matrices $ R_{\nu} \in SL(2,\C) $ such that
$  R^{-1}_{\nu}A_{\nu}R_{\nu} = \frac{1}{2}\theta_{\nu}\sigma_3 $,
$ \theta_{\nu} \in \C\backslash\Z $.
In the neighbourhood of a regular singularity $ \Psi(\lambda) $ can be expanded as
\begin{equation}
   \Psi(\lambda) = \sum^{\infty}_{m=0} \Psi_{m\nu}
            (\lambda-\nu)^{m+\frac{\theta_{\nu}}{2}\sigma_3} C_{\nu},
\label{PVI_regE}
\end{equation}
for $ \nu \in\{0,t,1\} $ and for $ \lambda = \infty $ in the form
\begin{equation}
   \Psi(\lambda) = \left(I+ \sum^{\infty}_{m=1} \Psi_{m\infty}\lambda^{-m} \right)
            \lambda^{-\frac{\theta_{\infty}}{2}\sigma_3}.
\label{PVI_regEinfty}
\end{equation}
The monodromy matrices $ M_{\nu} (\nu \in\{0,t,1,\infty\}) $ are defined by 
$ \left.\Psi\right|_{\nu+\delta e^{2\pi i}} = 
  \left.\Psi\right|_{\nu+\delta}M_{\nu} $,
and are given in terms of the connection matrices $ C_{\nu} $ by
\begin{equation}
   M_{\nu} = C^{-1}_{\nu}e^{\pi i\theta_{\nu}\sigma_3}C_{\nu},
   \quad C_{\infty} = I .
\label{PVI_mon}
\end{equation}
They satisfy the cyclic relation which in our conventions is taken to be 
\begin{equation}
  M_{\infty}M_{1}M_{t}M_{0}=I ,
\label{PVI_cyclic}
\end{equation}
and corresponds to the particular basis of loops given in Fig.\ref{PVI_cyclic.fig}

\begin{figure}
\begin{xy}{
   (0,0)*{};
   (60,20)="BASE";
   "BASE";"BASE"**\crv{(30,-40)&(50,-40)};?(0.25)*\dir{>};
   "BASE";"BASE"**\crv{(50,-40)&(70,-40)};?(0.25)*\dir{>};
   "BASE";"BASE"**\crv{(70,-40)&(90,-40)};?(0.25)*\dir{>};
   "BASE";"BASE"**\crv{(90,-40)&(110,-40)};?(0.25)*\dir{>};
   (47,-20)*+{\cdot\,0};(60,-20)*+{\cdot\,t};(74,-20)*+{\cdot\,1};(88,-20)*+{\cdot\,\infty};
   "BASE"+(0,3)*{\lambda_0}
}
\end{xy}
\caption{Monodromy representation of the fundamental group for 
$ \mathbb{C}\backslash\{0,t,1,\infty\} $}\label{PVI_cyclic.fig}
\end{figure}
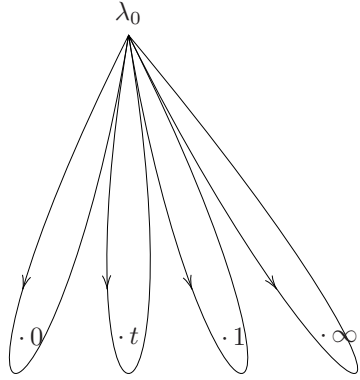

The monodromy data 
$ {\mathcal M}_{\rm VI}:=\{ \theta_{\nu}, C_{\nu}, M_{\nu} | \nu = 0,t,1,\infty  \} $
are preserved under deformations with respect to $ t $.
The invariants of the monodromy data are defined to be
$ p_{\mu} = 2\cos\pi\theta_{\mu}:= {\rm Tr}M_{\mu}, \;
  \mu \in \{0,t,1,\infty\} $ and
$ p_{\mu\nu} = 2\cos\pi\sigma_{\mu\nu}:= {\rm Tr}M_{\mu}M_{\nu}, \;
  \mu,\nu \in \{0,t,1\} $.

Now we are in a position to present the details of a parameterisation of the
monodromy matrices and the expansions of the $\tau$-function about the
singular points $ 0,1,\infty $ given in Jimbo's study \cite{Ji_1982}.
In this work Jimbo states the following conditions under which his results apply
\begin{align}
  & \theta_0, \theta_t, \theta_1, \theta_{\infty} \notin \mathbb{Z} ,
  \label{PVI_conditions:a}\\
  & 0 < {\Re}(\sigma_{0t}) < 1 ,
  \label{PVI_conditions:b}\\
  & \theta_0\pm\theta_t\pm\sigma_{0t},\quad 
    \theta_{\infty}\pm\theta_1\pm\sigma_{0t} \notin 2\mathbb{Z} ,
  \label{PVI_conditions:c}
\end{align}
and we call these {\it generic conditions}. In our application the non-resonant
condition (\ref{PVI_conditions:a}) will be adhered to, however (\ref{PVI_conditions:c})
can be relaxed in a meaningful way and we conjecture that the results of Jimbo still hold
under suitable limiting procedures where the left-hand sides $ \to 2\mathbb{Z} $.

\vfill\eject

\begin{sideways}
\begin{minipage}[c][12cm][c]{20cm}{
When $ \sigma_{0t} \neq 0 $ the parameterisation of the monodromy matrices employed 
by Jimbo is
\begin{equation}
  M_{\infty} = 
  \begin{pmatrix}
    e^{\pi i\theta_{\infty}} & 0 \cr
    0 & e^{-\pi i\theta_{\infty}}
  \end{pmatrix} ,
\label{PVI_Minfty}
\end{equation}
\begin{equation}
  M_{1} = \frac{1}{i\sin\pi\theta_{\infty}}
  \begin{pmatrix}
    \cos\pi\sigma-e^{-\pi i\theta_{\infty}}\cos\pi\theta_{1}
    &
    -2re^{-\pi i\theta_{\infty}}\sin\frac{\pi}{2}(\theta_{\infty}+\theta_{1}+\sigma)
                                  \sin\frac{\pi}{2}(\theta_{\infty}+\theta_{1}-\sigma)
  \cr
 2r^{-1}e^{\pi i\theta_{\infty}}\sin\frac{\pi}{2}(\theta_{\infty}-\theta_{1}+\sigma)
                                  \sin\frac{\pi}{2}(\theta_{\infty}-\theta_{1}-\sigma)
    &
    -\cos\pi\sigma+e^{\pi i\theta_{\infty}}\cos\pi\theta_{1}
  \end{pmatrix} ,
\label{PVI_M1}
\end{equation}
\begin{equation}
  DM_{t}D^{-1} = \frac{1}{i\sin\pi\sigma}
  \begin{pmatrix}
    e^{\pi i\sigma}\cos\pi\theta_{t}-\cos\pi\theta_{0}
    &
    -2se^{\pi i\sigma}\sin\frac{\pi}{2}(\theta_{0}+\theta_{t}-\sigma)
                                  \sin\frac{\pi}{2}(\theta_{0}-\theta_{t}+\sigma)
  \cr
 2s^{-1}e^{-\pi i\sigma}\sin\frac{\pi}{2}(\theta_{0}+\theta_{t}+\sigma)
                                  \sin\frac{\pi}{2}(\theta_{0}-\theta_{t}-\sigma)
    &
    -e^{-\pi i\sigma}\cos\pi\theta_{t}+\cos\pi\theta_{0}
  \end{pmatrix} ,
\label{PVI_Mt}
\end{equation}
\begin{equation}
  DM_{0}D^{-1} = \frac{1}{i\sin\pi\sigma}
  \begin{pmatrix}
    e^{\pi i\sigma}\cos\pi\theta_{0}-\cos\pi\theta_{t}
    &
     2s\sin\frac{\pi}{2}(\theta_{0}+\theta_{t}-\sigma)
                                  \sin\frac{\pi}{2}(\theta_{0}-\theta_{t}+\sigma)
  \cr
-2s^{-1}\sin\frac{\pi}{2}(\theta_{0}-\theta_{t}-\sigma)
                                  \sin\frac{\pi}{2}(\theta_{0}+\theta_{t}+\sigma)
    &
    -e^{-\pi i\sigma}\cos\pi\theta_{0}+\cos\pi\theta_{t}
  \end{pmatrix} ,
\label{PVI_M0}
\end{equation}
where
\begin{equation}
  D = 
  \begin{pmatrix}
    \sin\frac{\pi}{2}(\theta_{\infty}-\theta_{1}-\sigma)
    &
    r\sin\frac{\pi}{2}(\theta_{\infty}+\theta_{1}+\sigma)
    \cr
    r^{-1}\sin\frac{\pi}{2}(\theta_{\infty}-\theta_{1}+\sigma)
    &
    \sin\frac{\pi}{2}(\theta_{\infty}+\theta_{1}-\sigma)
  \end{pmatrix} ,
\label{PVI_C}
\end{equation}
with the short-hand notation $ \sigma=\sigma_{0t} $, $ s=s_{0t} $. The quantity 
$ s_{0t} $ together with $ \sigma_{0t} $ defines a unique solution to the \PVI
$\sigma$-form (\ref{PVI_SF}). The other parameter $ r $ is an arbitrary non-zero
complex constant and is a free parameter in the monodromy matrices but doesn't
appear in the local expansions. 
}
\end{minipage} 
\end{sideways}
\vfill\eject

A key formula is the following connection relation which relates $ s_{0t}, \sigma_{0t} $ 
to $ \sigma_{t1} $ and $ \sigma_{01} $.
\begin{lemma}[\cite{Ji_1982},\cite{Bo_2005}]
Under the above generic conditions (\ref{PVI_conditions:a},\ref{PVI_conditions:b},\ref{PVI_conditions:c})
and notations
\begin{multline}
  4s^{\pm 1}\sin\frac{\pi}{2}(\theta_{0}+\theta_{t}\mp\sigma_{0t})
            \sin\frac{\pi}{2}(\theta_{0}-\theta_{t}\pm\sigma_{0t}) \\ \times
            \sin\frac{\pi}{2}(\theta_{\infty}+\theta_{1}\mp\sigma_{0t})
            \sin\frac{\pi}{2}(\theta_{\infty}-\theta_{1}\pm\sigma_{0t}) \\
 = e^{\pm\pi i\sigma_{0t}}
   \left( \pm i\sin\pi\sigma_{0t}\cos\pi\sigma_{t1}-\cos\pi\theta_{t}\cos\pi\theta_{\infty}
          -\cos\pi\theta_{0}\cos\pi\theta_{1} \right) \\
          \pm i \sin\pi\sigma_{0t}\cos\pi\sigma_{01}+\cos\pi\theta_{t}\cos\pi\theta_{1}
          +\cos\pi\theta_{\infty}\cos\pi\theta_{0} .
\label{PVI_pSoln}
\end{multline}
\end{lemma}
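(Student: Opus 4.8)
The plan is to verify (\ref{PVI_pSoln}) by computing the two invariant traces ${\rm Tr}(M_tM_1)=2\cos\pi\sigma_{t1}$ and ${\rm Tr}(M_0M_1)=2\cos\pi\sigma_{01}$ directly from the explicit parameterisation, and then forming the linear combination of the two resulting equations that isolates $s^{+1}$ for the upper signs and $s^{-1}$ for the lower signs. Since (\ref{PVI_pSoln}) packages two identities into one, it is enough to establish each sign separately.

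Because the trace is invariant under conjugation, I would carry out both computations in the frame supplied by $D$, writing ${\rm Tr}(M_tM_1)={\rm Tr}((DM_tD^{-1})(DM_1D^{-1}))$ and likewise for $M_0M_1$. The matrices $DM_tD^{-1}$ and $DM_0D^{-1}$ are already given in closed form in (\ref{PVI_Mt}) and (\ref{PVI_M0}), so the only new ingredient is the conjugate $DM_1D^{-1}$, which I would obtain from (\ref{PVI_M1}) and (\ref{PVI_C}) by a single $2\times2$ calculation. A useful checkpoint here is that the free gauge parameter $r$, which appears in both $D$ and $M_1$, must cancel in $DM_1D^{-1}$, leaving a matrix depending only on the $\theta_\nu$ and $\sigma=\sigma_{0t}$; this cancellation is forced by the $r$-independence of all monodromy invariants.

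Forming the two products and tracing, I would exploit that the factors $s$ and $s^{-1}$ occur only in the off-diagonal entries of (\ref{PVI_Mt}) and (\ref{PVI_M0}), the diagonals being $s$-independent. Each trace is thus an affine function $\alpha s+\beta s^{-1}+\gamma$ whose coefficient of $s$ comes from the product of the $(1,2)$ entry of $DM_tD^{-1}$ (respectively $DM_0D^{-1}$) with the $(2,1)$ entry of $DM_1D^{-1}$. It is exactly this product that generates the four half-angle sines on the left of (\ref{PVI_pSoln}): the pair $\sin\frac{\pi}{2}(\theta_0+\theta_t\mp\sigma)\sin\frac{\pi}{2}(\theta_0-\theta_t\pm\sigma)$ descends from $M_t$ or $M_0$, while the pair $\sin\frac{\pi}{2}(\theta_\infty+\theta_1\mp\sigma)\sin\frac{\pi}{2}(\theta_\infty-\theta_1\pm\sigma)$ arises from the corresponding entry of $DM_1D^{-1}$. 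To reduce the remaining $s$-independent part and the various prefactors I would repeatedly apply $2\sin\frac{\pi}{2}(a+\sigma)\sin\frac{\pi}{2}(a-\sigma)=\cos\pi\sigma-\cos\pi a$ and keep track of the $e^{\pm\pi i\sigma}$ weights produced by the $1/(i\sin\pi\sigma)$ normalisations in (\ref{PVI_Mt}) and (\ref{PVI_M0}).

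The main obstacle is precisely this final trigonometric reorganisation: after eliminating $r$ and collecting terms, one must check that the chosen combination of the two trace equations collapses the coefficient of $s^{\pm1}$ into the clean four-sine product and simultaneously reassembles the leftover terms into the shape $e^{\pm\pi i\sigma_{0t}}(\pm i\sin\pi\sigma_{0t}\cos\pi\sigma_{t1}-\cos\pi\theta_t\cos\pi\theta_\infty-\cos\pi\theta_0\cos\pi\theta_1)\pm i\sin\pi\sigma_{0t}\cos\pi\sigma_{01}+\cos\pi\theta_t\cos\pi\theta_1+\cos\pi\theta_\infty\cos\pi\theta_0$ of the right-hand side. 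As independent checks I would verify, first, that the cyclic relation (\ref{PVI_cyclic}) together with ${\rm Tr}\,X={\rm Tr}\,X^{-1}$ for $X\in SL(2,\C)$ yields ${\rm Tr}(M_tM_1)={\rm Tr}(M_0M_\infty)$, a trace trivial to evaluate since $M_\infty$ is diagonal; and second, that multiplying the upper- and lower-sign instances of (\ref{PVI_pSoln}) eliminates $s$ and must reproduce the Jimbo--Fricke cubic relation among $\cos\pi\sigma_{0t}$, $\cos\pi\sigma_{t1}$, $\cos\pi\sigma_{01}$ and the $\cos\pi\theta_\nu$, giving an internal-consistency confirmation.
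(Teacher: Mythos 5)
The paper itself gives no proof of this lemma --- it is imported from Jimbo \cite{Ji_1982} with Boalch's corrections \cite{Bo_2005} --- so the benchmark is the derivation in those references, and your overall strategy coincides with it: evaluate the two invariants $ {\rm Tr}(M_tM_1)=2\cos\pi\sigma_{t1} $ and $ {\rm Tr}(M_0M_1)=2\cos\pi\sigma_{01} $ on the explicit parameterisation (\ref{PVI_M1})--(\ref{PVI_C}), note that each trace is affine in $ s $ and $ s^{-1} $ with the $ s^{\pm1} $ coefficients coming from the off-diagonal cross terms, and take the combination with weights $ e^{\pm\pi i\sigma_{0t}} $ and $ 1 $ to isolate $ s^{\pm1} $. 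Your two consistency checks are also well chosen: $ {\rm Tr}(M_tM_1)={\rm Tr}(M_0M_{\infty}) $ follows from (\ref{PVI_cyclic}) and $ {\rm Tr}\,X={\rm Tr}\,X^{-1} $ in $ SL(2,\C) $, and eliminating $ s $ by multiplying the two signed identities is exactly how the monodromy manifold (\ref{PVI_manifold}) arises --- the paper explicitly describes that lemma as ``a consequence of this relation''. The sine bookkeeping is right too: the $ (1,2) $ entries of (\ref{PVI_Mt}), (\ref{PVI_M0}) carry $ \sin\frac{\pi}{2}(\theta_0+\theta_t-\sigma)\sin\frac{\pi}{2}(\theta_0-\theta_t+\sigma) $, and the $ (2,1) $ entry of $ DM_1D^{-1} $ supplies $ \sin\frac{\pi}{2}(\theta_{\infty}-\theta_1+\sigma)\sin\frac{\pi}{2}(\theta_{\infty}+\theta_1-\sigma) $, reproducing the upper-sign left side of (\ref{PVI_pSoln}).

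There is, however, one concretely false intermediate claim: the parameter $ r $ does \emph{not} cancel in $ DM_1D^{-1} $. Writing $ \Delta={\rm diag}(r^{1/2},r^{-1/2}) $, both (\ref{PVI_M1}) and (\ref{PVI_C}) have the structure $ M_1=\Delta\tilde{M}_1\Delta^{-1} $, $ D=\Delta\tilde{D}\Delta^{-1} $, where the tilde denotes the $ r=1 $ value; hence $ DM_1D^{-1}=\Delta\bigl(\tilde{D}\tilde{M}_1\tilde{D}^{-1}\bigr)\Delta^{-1} $, whose off-diagonal entries scale as $ r^{\pm1} $. They would be $ r $-free only if $ \tilde{D} $ diagonalised $ \tilde{M}_1 $, but $ D $ is the change of basis that diagonalises $ M_{\infty}M_1=(M_tM_0)^{-1} $, not $ M_1 $ alone: a direct computation gives $ \bigl(\tilde{D}\tilde{M}_1\tilde{D}^{-1}\bigr)_{21}\propto e^{-\pi i\sigma}\sin\pi\theta_{\infty}\,\sin\frac{\pi}{2}(\theta_{\infty}-\theta_1+\sigma)\sin\frac{\pi}{2}(\theta_{\infty}+\theta_1-\sigma) $, which is nonzero precisely under the generic conditions (\ref{PVI_conditions:a}), (\ref{PVI_conditions:c}). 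The correct bookkeeping is that the whole monodromy tuple with parameters $ (r,s) $ is the global $ \Delta $-conjugate of the tuple with parameters $ (1,s/r) $ --- indeed $ \Delta^{-1}\,(DM_tD^{-1})\,\Delta $ is the matrix (\ref{PVI_Mt}) with $ s $ replaced by $ s/r $ --- so the trace invariants depend on $ r $ and $ s $ only through the combination $ s/r $, and the clean $ s^{\pm1} $ of (\ref{PVI_pSoln}) corresponds to the normalisation $ r=1 $. (Compare the \PV parameterisation (\ref{PV_M0}), (\ref{PV_M1}), where the authors write the product $ rs $ throughout for the same reason.) So: fix the residual diagonal gauge $ r=1 $ at the outset, which is legitimate since conjugation changes no trace, and your argument then goes through as planned; as written, the asserted cancellation inside $ DM_1D^{-1} $ would fail when checked and the traces would appear spuriously $ r $-dependent.
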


A consequence of this relation is a constraint on the monodromy invariants which is an
algebraic variety defining a sub-manifold, the monodromy manifold, of $ \mathbb{C}^3 $.
\begin{lemma}[\cite{Ji_1982}]
The monodromy manifold for \PVI is given by
\begin{multline}
  {\mathfrak M}(p_{0t},p_{t1},p_{01}) =
  p_{0t}p_{t1}p_{01}+p_{0t}^2+p_{t1}^2+p_{01}^2
 \\
 -(p_0p_t+p_1p_{\infty})p_{0t}-(p_tp_1+p_0p_{\infty})p_{t1}-(p_0p_1+p_tp_{\infty})p_{01}
 \\
 +p_0^2+p_t^2+p_1^2+p_{\infty}^2+p_0p_tp_1p_{\infty}-4=0 .
\label{PVI_manifold}
\end{multline}
\end{lemma}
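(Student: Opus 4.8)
The plan is to derive the relation purely from the cyclic constraint (\ref{PVI_cyclic}) together with the trace algebra of $SL(2,\C)$, without ever invoking the explicit parameterisation (\ref{PVI_Minfty})--(\ref{PVI_C}); the point is that $\mathfrak M$ is an identity satisfied by \emph{any} quadruple of $SL(2,\C)$ matrices with unit product. The whole argument rests on the Cayley--Hamilton relation for $X\in SL(2,\C)$, namely $X+X^{-1}=({\rm Tr}\,X)\,I$, which yields the two fundamental trace identities ${\rm Tr}(X^{-1})={\rm Tr}(X)$ and
\[
  {\rm Tr}(XY)+{\rm Tr}(XY^{-1})={\rm Tr}(X)\,{\rm Tr}(Y),
\]
the latter obtained by multiplying Cayley--Hamilton on the right by $Y$ and taking the trace.

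First I would use $M_{\infty}M_{1}M_{t}M_{0}=I$ to write $M_{\infty}=(M_{1}M_{t}M_{0})^{-1}$, so that $p_{\infty}={\rm Tr}(M_{1}M_{t}M_{0})$. Setting $A=M_{1}$, $B=M_{t}$, $C=M_{0}$, the two distinct cyclic orderings of the triple product give $u:={\rm Tr}(ABC)=p_{\infty}$ and $v:={\rm Tr}(ACB)$. Under this identification the six remaining invariants are ${\rm Tr}(A)=p_{1}$, ${\rm Tr}(B)=p_{t}$, ${\rm Tr}(C)=p_{0}$, ${\rm Tr}(AB)=p_{t1}$, ${\rm Tr}(BC)=p_{0t}$, ${\rm Tr}(CA)=p_{01}$, each equality using only cyclicity of the trace.

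Next I would establish the two Fricke identities expressing the elementary symmetric functions of $u$ and $v$ in terms of these six invariants,
\[
  u+v={\rm Tr}(A)\,{\rm Tr}(BC)+{\rm Tr}(B)\,{\rm Tr}(CA)+{\rm Tr}(C)\,{\rm Tr}(AB)-{\rm Tr}(A)\,{\rm Tr}(B)\,{\rm Tr}(C),
\]
together with the corresponding quadratic expression for $uv$. Both follow by repeated application of the fundamental identity: the sum comes from expanding ${\rm Tr}(A\cdot BC)$ and reducing the resulting $A^{-1}$ term, while the product is obtained by computing $uv$ through systematic elimination of every inverse that appears. Since $u+v$ and $uv$ are now polynomials in the six invariants \emph{alone}, the value $u=p_{\infty}$ must satisfy its own characteristic equation $u^{2}-(u+v)\,u+uv=0$; substituting the Fricke expressions for the coefficients converts this tautology into a genuine polynomial relation among $p_{\infty}$ and the six traces. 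Finally I would expand and regroup and check term by term that it coincides with $\mathfrak M(p_{0t},p_{t1},p_{01})=0$ in (\ref{PVI_manifold}): the cubic $p_{0t}p_{t1}p_{01}$ and the three squares come from $uv$; the mixed term $-(p_{0}p_{t}+p_{1}p_{\infty})p_{0t}$ and its two companions assemble from the $-{\rm Tr}(B)\,{\rm Tr}(C)\,{\rm Tr}(BC)$-type terms in $uv$ together with the cross terms produced by $-(u+v)\,u$; and the remaining $p_{0}^{2}+p_{t}^{2}+p_{1}^{2}+p_{\infty}^{2}+p_{0}p_{t}p_{1}p_{\infty}-4$ collects the vertex contributions, with $p_{\infty}^{2}$ supplied by $u^{2}$.

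The main obstacle is the product identity for $uv$: unlike the sum, it does not reduce in a single pass, and one must track signs carefully while eliminating the inverse matrices, so that the symmetric cross terms $-{\rm Tr}(A)\,{\rm Tr}(B)\,{\rm Tr}(AB)$, the triple $+{\rm Tr}(AB)\,{\rm Tr}(BC)\,{\rm Tr}(CA)$, and the constant $-4$ all emerge with the correct coefficients. Everything else is bookkeeping; the only conceptual input beyond the trace algebra is the observation that fixing $u=p_{\infty}$ while its symmetric functions are $v$-free is what turns the characteristic equation into the nontrivial surface relation.
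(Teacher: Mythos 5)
Your proposal is correct, and it is a genuinely different derivation from the one the paper leans on. The paper presents the lemma as ``a consequence of'' the connection relation (\ref{PVI_pSoln}): multiplying the two branches of that formula (the $s^{+1}$ and $s^{-1}$ equations) eliminates $s$, and after trigonometric simplification using $p_\mu=2\cos\pi\theta_\mu$, $p_{\mu\nu}=2\cos\pi\sigma_{\mu\nu}$ one lands on (\ref{PVI_manifold}); this route presupposes the generic conditions (\ref{PVI_conditions:a})--(\ref{PVI_conditions:c}) under which Jimbo's parameterisation and connection formula hold, and as a by-product it ties the auxiliary parameter $s$ to the invariants. Your route is the classical Fricke identity on the $SL(2,\C)$ character variety: it uses only $M_\infty M_1M_tM_0=I$, $\det M_\nu=1$, and the Cayley--Hamilton consequences ${\rm Tr}(X^{-1})={\rm Tr}(X)$ and ${\rm Tr}(XY)+{\rm Tr}(XY^{-1})={\rm Tr}(X){\rm Tr}(Y)$, so it establishes (\ref{PVI_manifold}) as a universal polynomial identity with no genericity or parameterisation whatsoever. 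I checked your dictionary and the bookkeeping: with $x=p_1$, $y=p_t$, $z=p_0$, $p=p_{t1}$, $q=p_{0t}$, $r=p_{01}$, $u={\rm Tr}(M_1M_tM_0)={\rm Tr}(M_\infty^{-1})=p_\infty$, the two Fricke identities $u+v=xq+yr+zp-xyz$ and $uv=p^2+q^2+r^2+pqr-xyp-yzq-zxr+x^2+y^2+z^2-4$ substituted into $u^2-(u+v)u+uv=0$ give
\begin{multline*}
  p_\infty^2-\bigl(p_1p_{0t}+p_tp_{01}+p_0p_{t1}-p_0p_tp_1\bigr)p_\infty
  +p_{0t}^2+p_{t1}^2+p_{01}^2+p_{0t}p_{t1}p_{01}\\
  -p_1p_tp_{t1}-p_tp_0p_{0t}-p_0p_1p_{01}+p_0^2+p_t^2+p_1^2-4=0,
\end{multline*}
which regroups term by term into (\ref{PVI_manifold}) exactly as you claim. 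The trade-off: the paper's elimination argument comes essentially for free once (\ref{PVI_pSoln}) is in hand and carries the extra connection information about $s$, while yours is more elementary and more general --- it shows the constraint is intrinsic to any rank-two monodromy representation, which is conceptually the right statement; the one step you leave as a plan rather than a computation, the product identity for $uv$, is standard and does go through by the inverse-elimination procedure you describe, so there is no gap.
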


\begin{theorem}[\cite{Ji_1982}]
Under the conditions \ref{PVI_conditions:a},\ref{PVI_conditions:b},\ref{PVI_conditions:c}
then we have the asymptotic expansion of the $\tau$-function as $ t \to 0 $
in the domain $ \{t\in \C| 0<|t|<\varepsilon, |{\rm arg}(t)|<\phi\} $ for all $ \varepsilon>0 $
and any $ \phi>0 $
\begin{multline}
  \tau(t) \sim const\cdot t^{(\sigma^2-\theta^2_0-\theta^2_t)/4} \\
   \times\Bigg\{ 1 
               + \frac{(\theta^2_0-\theta^2_t-\sigma^2)(\theta^2_{\infty}-\theta^2_1-\sigma^2)}
                     {8\sigma^2}t \\
   - \hat{s}\frac{[\theta^2_0-(\theta_t-\sigma)^2][\theta^2_{\infty}-(\theta_1-\sigma)^2]}
                 {16\sigma^2(1+\sigma)^2}t^{1+\sigma} \\
   - \hat{s}^{-1}\frac{[\theta^2_0-(\theta_t+\sigma)^2][\theta^2_{\infty}-(\theta_1+\sigma)^2]}
                 {16\sigma^2(1-\sigma)^2}t^{1-\sigma}
   + {\rm O}(|t|^{2(1-\Re(\sigma))}) \Bigg\} ,
\label{PVI_tExp_0}
\end{multline}
where $ \sigma \neq 0 $ and $ \hat{s} $ are related to $ s $ through
\begin{multline}
  \hat{s} = s \frac{\Gamma^2(1-\sigma)\Gamma(1+\frac{1}{2}(\theta_0+\theta_t+\sigma))
                    \Gamma(1+\frac{1}{2}(-\theta_0+\theta_t+\sigma))}
                   {\Gamma^2(1+\sigma)\Gamma(1+\frac{1}{2}(\theta_0+\theta_t-\sigma))
                    \Gamma(1+\frac{1}{2}(-\theta_0+\theta_t-\sigma))} \\
        \times\frac{\Gamma(1+\frac{1}{2}(\theta_{\infty}+\theta_1+\sigma))
                    \Gamma(1+\frac{1}{2}(-\theta_{\infty}+\theta_1+\sigma))}
                   {\Gamma(1+\frac{1}{2}(\theta_{\infty}+\theta_1-\sigma))
                    \Gamma(1+\frac{1}{2}(-\theta_{\infty}+\theta_1-\sigma))} ,
\label{PVI_s_0t}
\end{multline}
and we employ the short-hand notation $ s=s_{0t} $,$ \hat{s}=\hat{s}_{0t} $ and 
$ \sigma=\sigma_{0t} $. The monodromy data defining a unique solution to the sixth
Painlev\'e system is $ \{\sigma_{0t}, s_{0t}\} $.
\end{theorem}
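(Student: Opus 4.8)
The plan is to follow Jimbo's isomonodromic deformation method, exploiting the confluence of the two singularities $\lambda=0$ and $\lambda=t$ as $t\to0$. The organising observation is that the monodromy obtained by encircling this coalescing pair is carried by $M_tM_0$, whose trace $2\cos\pi\sigma$ fixes the combined exponent $\sigma=\sigma_{0t}$ of the merged singularity. First I would perform a matched asymptotic analysis of the linear system (\ref{PVI_linear}). Introducing the inner variable $\xi=\lambda/t$, the region $\xi=\mathrm{O}(1)$ produces, in the limit $t\to0$, a hypergeometric system on $\{0,1,\infty\}$ with exponents $\theta_0,\theta_t,\sigma$, whereas the outer region $\lambda=\mathrm{O}(1)$ yields a second hypergeometric system on $\{0,1,\infty\}$ with exponents $\sigma,\theta_1,\theta_\infty$, the points $0$ and $t$ having fused into one singularity of exponent $\sigma$.

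Second, the leading power is read off from this combined exponent. Using $\mathrm{tr}\,A_0=\mathrm{tr}\,A_t=0$ and the $2\times2$ identity $\det(A_0\+A_t)=\det A_0+\det A_t-\mathrm{Tr}(A_0A_t)$ together with $\det(A_0\+A_t)\to-\sigma^2/4$, one obtains $\mathrm{Tr}(A_0A_t)\to\tfrac14(\sigma^2\m\theta_0^2\m\theta_t^2)$. Inserting this into the definition (\ref{PVI_tau}) and retaining the dominant $t^{-1}$ contribution, a single integration returns the prefactor $t^{(\sigma^2-\theta_0^2-\theta_t^2)/4}$ and thereby the overall power in (\ref{PVI_tExp_0}).

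Third, and this is where the substantive computation sits, I would determine the subleading terms in the expansions of the residues $A_\nu(t)$. The analytic correction of order $t$ follows from the first subleading term of the outer solution and produces the coefficient of $t$ in (\ref{PVI_tExp_0}); the resonant corrections proportional to $t^{1\pm\sigma}$ arise from the overlap of inner and outer solutions and carry the transcendental constant. The coefficient $\hat s$ surfaces as a connection coefficient of the inner hypergeometric problem, hence as a ratio of Gamma functions, and the passage from the monodromy datum $s=s_{0t}$ appearing in the off-diagonal entries of $DM_tD^{-1}$ in (\ref{PVI_Mt}) to the expansion constant $\hat s$ is exactly the relation (\ref{PVI_s_0t}). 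The main obstacle is the bookkeeping of normalisation constants across the matching region: the inner and outer fundamental solutions must be glued with the correct gauge so that their connection matrices compose into the global datum $s$, and any stray $t$-independent factor would rescale $\hat s$ and thus corrupt the $t^{1\pm\sigma}$ coefficients.

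Finally, integrating $\tfrac{d}{dt}\log\tau$ term by term and exponentiating assembles the expansion (\ref{PVI_tExp_0}); one may cross-check the result against the $\sigma$-form (\ref{PVI_SF}), into which the candidate expansion must substitute identically order by order. The uniqueness assertion is then immediate from the Riemann--Hilbert correspondence: for fixed exponents $\theta_\nu$ the pair $(\sigma_{0t},s_{0t})$ furnishes precisely the two constants of integration of the second-order \PVI equation, while the residual parameter $r$ in (\ref{PVI_C}) is a diagonal conjugation that leaves both the $\tau$-function and the underlying solution unchanged.
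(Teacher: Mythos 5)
The paper itself gives no proof of this theorem --- it is quoted verbatim from Jimbo \cite{Ji_1982} --- and your sketch is a faithful reconstruction of Jimbo's original argument: coalescence of the regular singularities $0$ and $t$ of (\ref{PVI_linear}), matched inner ($\xi=\lambda/t$) and outer hypergeometric systems whose connection coefficients supply the Gamma-function ratio (\ref{PVI_s_0t}), the correct identity $\mathrm{Tr}(A_0A_t)\to\tfrac14(\sigma^2-\theta_0^2-\theta_t^2)$ feeding (\ref{PVI_tau}) to produce the prefactor, and the Riemann--Hilbert uniqueness of the pair $(\sigma_{0t},s_{0t})$ with $r$ a gauge parameter. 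Since this is essentially the same route as the cited source, no further comparison is needed.
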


\section{The Spectrum Singularity Ensemble}
\setcounter{equation}{0}
A fundamental ensemble in random matrix theory is the ensemble of finite rank 
(rank $ =N $) random unitary matrices - the {\it Dyson circular unitary ensemble (CUE)}.
Consider a member of this ensemble $ U $ with eigenvalues 
$ z_1=e^{i\theta_l}, \ldots, z_N=e^{i\theta_N} $. Then the eigenvalue probability 
density function for the ensemble is the Haar measure for $ U(N) $
\begin{equation}
   p_N(\theta_1,\ldots,\theta_N) := \frac{1}{(2\pi)^N N!} 
       \prod_{1 \leq j < k \leq N} |z_j-z_k|^2 .
\label{CUE}
\end{equation}
A generalisation of this ensemble, referred to as the 
{\it spectrum singularity ensemble (SSE)} \cite{FW_2004},
has an eigenvalue probability density 
function containing additional algebraic singularities at $ z = 0, -1, -1/t $ where
$ t = e^{i\phi}, \phi \in [0,2\pi) $.
In the log-gas picture of the CUE the eigenvalues of a random unitary matrix 
are mobile unit charges subject to a pair-wise mutual logarithmic repulsion and
in the SSE these charges are subject to additional external fields -
the logarithmic electrostatic potential of impurity charges located at the ends 
of the sector, at $ z = -1, -1/t $ with charges $ \omega_1, \mu $ 
respectively, and an external electric field of strength $ \omega_2 $.
The generalised spectral average we wish to investigate is the partition function 
of such an electrostatic system, and is given by the $N$-dimensional integral \cite{FW_2004}
\begin{multline}
   \mathcal{A}_N(t;\omega_1,\omega_2,\mu;\xi^*)
    := \frac{1}{N!} 
       \left(\int^{\pi}_{-\pi}-\xi^*\int^{\pi}_{\pi-\phi}\right) \frac{d\theta_1}{2\pi}
\ldots \left(\int^{\pi}_{-\pi}-\xi^*\int^{\pi}_{\pi-\phi}\right) \frac{d\theta_N}{2\pi} \\
      \times\prod^{N}_{l=1} z^{-i\omega_2}_l |1+z_l|^{2\omega_1} |1+tz_l|^{2\mu}
      \prod_{1 \leq j < k \leq N} |z_j-z_k|^2,
\label{SSE_AN}
\end{multline}
where $ \xi^* \in \C $ and the 
parameters $ \omega_1,\omega_2,\mu \in \C $, $ \omega = \omega_1+i\omega_2 $,
are restricted with $ \Re(2\omega_1), \Re(2\mu) > -1 $, $ N \in \Z_{\geq 0} $. 
The independent variable $ t $, whilst defined on the unit circle $ |t| = 1 $, can 
be analytically continued into the cut complex $t$-plane. 

It was shown in \cite{FW_2004} that the average $ \mathcal{A}_N(t) $ is the 
$N\in \Z_{\geq 0}$th member of a sequence of classical $\tau$-functions of the sixth 
Painlev\'e system. Thus this average is characterised by solutions of the nonlinear 
differential equation (\ref{PVI_SF}) subject to the boundary conditions 
given in \cite{FW_2006a}, which were derived following an idea introduced in \cite{FW_2006}.

\begin{theorem}\cite{FW_2006a}
For generic values of the parameters $ \mu,\omega,\bar{\omega} $, subject to 
$ 2\mu+2\omega_1 \notin \Z $, $ \Re(2\mu+2\omega_1)>0 $
the spectral average $ \mathcal{A}_N $ has the following expansion about $ t=1 $ 
\begin{multline}
  \mathcal{A}_N(t) =
  \prod^{N-1}_{k=0}\frac{k!\Gamma(2\mu+2\omega_1+k+1) }{ 
                      \Gamma(1+k+\mu+\omega)\Gamma(1+k+\mu+\bar{\omega})}
  \Bigg\{ 1 \\ 
  + \frac{N\mu(\bar{\omega}-\omega)}{2\mu+2\omega_1}(1-t) + {\rm O}((1-t)^2) \\
  + \frac{(-1)^{N+1}}{\sin\pi(2\mu+2\omega_1)}
    \left(\xi^*\frac{e^{-\pi i(\mu-\bar{\omega})}}{2i}
            +\frac{\sin\pi 2\mu\sin\pi(\mu+\omega)}{\sin\pi(2\mu+2\omega_1)} \right) \\
  \times
  \frac{\Gamma(1+2\mu)\Gamma(1+2\omega_1)\Gamma(1+\mu+\omega)\Gamma(1+\mu+\bar{\omega})
        }{ 
   \Gamma^2(2\mu+2\omega_1+2)\Gamma(2\mu+2\omega_1+1)\Gamma(N)\Gamma(-N-2\mu-2\omega_1) }
         (1-t)^{1+2\mu+2\omega_1} \\
  \times\left( 1+{\rm O}(1-t)\right) + {\rm O}((1-t)^{2+4\mu+4\omega_1})
  \Bigg\} .
\label{AN_exp_1}
\end{multline}
\end{theorem}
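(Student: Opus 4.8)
The plan is to analyse the $N$-fold integral (\ref{SSE_AN}) directly as $t\to 1$, exploiting that the two Fisher--Hartwig-type singularities, located at $z=-1$ with strength $\omega_1$ and at $z=-1/t$ with strength $\mu$, coalesce in this limit. I would organise the expansion into three pieces: the leading coalesced value (the Gamma-function prefactor), the analytic-in-$(1-t)$ corrections, and the single non-analytic term of order $(1-t)^{1+2\mu+2\omega_1}$.

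First I would fix the prefactor. Setting $t=1$ makes $\phi=0$, so the boundary integral $\int_{\pi-\phi}^{\pi}$ collapses to zero and the $\xi^*$-term drops out, while the two singular factors merge into $|1+z_l|^{2\mu+2\omega_1}$. The resulting integral
\[
\frac{1}{N!}\prod_{l=1}^N\int_{-\pi}^{\pi}\frac{d\theta_l}{2\pi}\,
\prod_l z_l^{-i\omega_2}|1+z_l|^{2\mu+2\omega_1}\prod_{j<k}|z_j-z_k|^2
\]
is a circular Selberg (Morris) integral, and its closed-form evaluation with the identifications $a=\mu+\omega_1$, $b=i\omega_2$ (so $a\pm b=\mu+\omega,\ \mu+\bar\omega$) is exactly the stated product $\prod_{k=0}^{N-1}k!\,\Gamma(2\mu+2\omega_1+k+1)/[\Gamma(1+k+\mu+\omega)\Gamma(1+k+\mu+\bar\omega)]$. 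For the analytic corrections I would Taylor-expand the $t$-dependent factor $\prod_l|1+tz_l|^{2\mu}$ about $t=1$, in the analytic continuation $(1+tz_l)^{\mu}(1+t^{-1}z_l^{-1})^{\mu}$ under which $\mathcal{A}_N$ is a \PVI $\tau$-function, and integrate term by term against the coalesced weight. Since $\tfrac{d}{dt}\log|1+tz|^{2\mu}\big|_{t=1}=\mu\,(z-1)/(z+1)$, each first-order moment reduces after symmetrisation to a one-point average, evaluated through the same Morris integral with integer-shifted parameters; the $z^{-i\omega_2}\leftrightarrow(\bar\omega-\omega)$ asymmetry produces the linear coefficient $N\mu(\bar\omega-\omega)/(2\mu+2\omega_1)$, with higher orders giving the $\mathrm{O}((1-t)^2)$ remainder.

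The crux, and the main obstacle, is the non-analytic term. Its exponent is dictated by the local coalescence region near $z=-1$: writing $z=-1+\zeta$, the two factors behave as $|\zeta|^{2\omega_1}|(1-t)+\zeta|^{2\mu}$, and the rescaling $\zeta=(1-t)u$ extracts the power $(1-t)^{1+2\mu+2\omega_1}$ together with a local Beta-type integral $\int|u|^{2\omega_1}|1+u|^{2\mu}\,du$. I would implement this by isolating one ``active'' eigenvalue in the coalescence window while the remaining $N-1$ feel only the merged singularity; the combinatorial weight then changes from $1/N!$ to $1/\Gamma(N)$, and the size-$(N-1)$ Morris integral with the extra charge of the active eigenvalue, simplified via the reflection formula, generates the $\Gamma(-N-2\mu-2\omega_1)^{-1}$ factor and the $\sin\pi(2\mu+2\omega_1)$ denominators. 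The delicate point is that the $\xi^*$-boundary integral $-\xi^*\int_{\pi-\phi}^{\pi}$ contributes at precisely this same order: with $\phi\sim i(1-t)$ the identical scaling gives $\int_0^1 v^{2\omega_1}(1-v)^{2\mu}\,dv$. Correctly combining the two contributions—tracking the branch and phase factors such as $e^{-\pi i(\mu-\bar\omega)}$, and reconciling the local normalisation with the global prefactor—is what yields the two-term coefficient $\xi^*e^{-\pi i(\mu-\bar\omega)}/(2i)+\sin\pi2\mu\,\sin\pi(\mu+\omega)/\sin\pi(2\mu+2\omega_1)$.

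As a consistency check I would confirm that the exponent $1+2\mu+2\omega_1$, and the absence of any lower-order non-analytic contribution, agree with the indicial analysis of the \PVI $\sigma$-form (\ref{PVI_SF}) expanded about $t=1$, with the local monodromy exponent identified as $\sigma_{t1}=2\mu+2\omega_1$. This simultaneously verifies that the expansion has the universal $\tau$-function shape and pins down that the computed integral coefficients are consistent with the second-order second-degree equation, which is the feature that makes $\{\sigma_{t1},\,\xi^*\}$-type data determine the solution uniquely.
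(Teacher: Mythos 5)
The paper itself contains no proof of this statement: it is imported wholesale from \cite{FW_2006a} (derived there ``following an idea introduced in \cite{FW_2006}''), so your derivation can only be judged on its own terms. Much of it is sound. The Morris/Selberg evaluation of the coalesced $t=1$ integral with $\mu+\omega,\mu+\bar\omega$ as the two exponent parameters does give the stated Gamma-product prefactor; the identity $\frac{d}{dt}\log\left[(1+tz)^{\mu}(1+t^{-1}z^{-1})^{\mu}\right]\big|_{t=1}=\mu(z-1)/(z+1)$ together with contiguous Morris integrals does yield the coefficient $N\mu(\bar\omega-\omega)/(2\mu+2\omega_1)$; and your treatment of the $\xi^*$-arc term is quantitatively right: only one eigenvalue can occupy the shrinking arc at this order (two or more enter at ${\rm O}((1-t)^{2+4\mu+4\omega_1})$, matching the stated remainder), the rescaling produces the Euler beta integral $\Gamma(1+2\omega_1)\Gamma(1+2\mu)/\Gamma(2+2\mu+2\omega_1)$, the charge-inserted size-$(N-1)$ Morris integral telescopes to the remaining Gamma factors, and the reflection identity $\Gamma(N+1+2\mu+2\omega_1)/\bigl[\pi\,\Gamma(N)\bigr]=(-1)^{N+1}/\bigl[\sin\pi(2\mu+2\omega_1)\,\Gamma(N)\,\Gamma(-N-2\mu-2\omega_1)\bigr]$ shows your combinatorics reproduce exactly the outer factors in (\ref{AN_exp_1}).

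The genuine gap is the $\xi^*$-independent part of the non-analytic coefficient, $\sin\pi 2\mu\,\sin\pi(\mu+\omega)/\sin\pi(2\mu+2\omega_1)$. Your mechanism for it---one ``active'' eigenvalue, rescale $\zeta=(1-t)u$, local integral $\int|u|^{2\omega_1}|1+u|^{2\mu}\,du$---fails as stated: that integral diverges at $|u|\to\infty$ whenever $\Re(2\mu+2\omega_1)>-1$, i.e.\ throughout the theorem's regime $\Re(2\mu+2\omega_1)>0$. The divergence is not incidental; it signals that the inner region does not decouple from the bulk, the divergent part being resummed into the analytic series, so the finite coefficient requires either a genuine inner--outer matched expansion or computation in the convergent exponent range followed by analytic continuation---and in either case careful tracking of the phases acquired on crossing the two branch points $z=-1$ and $z=-1/t$, both of which lie \emph{on} the integration contour separated by the arc. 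That entangled sine ratio is in effect a Gauss-hypergeometric connection coefficient (visible already at $N=1$, where $\mathcal{A}_1$ is a $\,{}_2F_1$ and the $t\to1$ connection formula produces it directly), not a product of local data, and your outline supplies no mechanism that would generate it; the classical $\tau$-function structure of $\mathcal{A}_N$ established in \cite{FW_2004} is what makes this tractable in \cite{FW_2006a}. Finally, your proposed consistency check cannot repair this: the indicial analysis of the \PVI $\sigma$-form (\ref{PVI_SF}) fixes only the exponent $1+2\mu+2\omega_1$, never the coefficient (which parameterises a one-parameter family of solutions), and it equally admits a term $(1-t)^{1-2\mu-2\omega_1}$---whose \emph{absence} here is precisely the nontrivial separatrix feature encoded in the vanishing of $s_{0t}$ in Theorem \ref{SSE}, an output of the expansion, not an input from the ODE.
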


The precise relationship between the spectrum singularity average $ \mathcal{A}_N(t) $ 
and the isomonodromy theory of the sixth Painlev\'e system was first given in 
\cite{FW_2006a}.
However because we intend to apply the Kitaev theory \cite{Ki_1994}, which treats 
the coalescence of the regular singularities $ 0,t $ of the \PVI system under the 
transition limit $ t\to 0 $, and the natural bulk scaling of $ A_N(t) $ is through 
the limit $ t\to 1 $ the monodromy data of our application will have to be recast 
in a suitable form.
\begin{theorem}\cite{FW_2006a}\label{SSE}
The spectrum singularity ensemble can be defined by the following monodromy data. 
The formal monodromy exponents are
\begin{equation}
   \theta_{0} = N+2\mu ,\quad
   \theta_{t} =-N-2\omega_1 ,\quad
   \theta_{1} = -\mu-\omega ,\quad 
   \theta_{\infty} = \mu+\bar{\omega} ,
   \label{SSE_Mexp:A}
\end{equation}
with the necessary classical condition
$ \theta_{0}-\theta_{t}+\theta_{1}-\theta_{\infty} \in 2\Z $. 
The relevant monodromy invariant is
\begin{equation}
   \sigma_{0t}=2\mu+2\omega_1 .
\label{SSE_MI}
\end{equation}
In this case the associated monodromy coefficient is given by 
$ s_{0t}=\epsilon\pi\hat{s}/2 $ under the limiting process 
$ \epsilon := \sigma_{0t}+\theta_{1}-\theta_{\infty} \to 0 $,
along with
\begin{equation}
  \hat{s} \sin\pi2\omega_1\sin\pi(\mu+\omega)
  = \frac{\sin\pi 2\mu\sin\pi(\mu+\omega)}{\sin\pi(2\mu+2\omega_1)}
       +\xi^*\frac{e^{-\pi i(\mu-\bar{\omega})}}{2i} .
  \label{SSE_st1:A}
\end{equation}
All monodromy matrices are upper triangular
\begin{gather}
  M_0 = \begin{pmatrix}
          e^{-\pi i\theta_{0}} & m_0 \\ 0 & e^{\pi i\theta_{0}} \\
        \end{pmatrix} ,
  \label{SSE_MA:a}\\
  M_t = \begin{pmatrix}
          e^{\pi i\theta_{t}} & m_t \\  0 & e^{-\pi i\theta_{t}} \\
        \end{pmatrix} ,
  \label{SSE_MA:b}\\
  M_1 = \begin{pmatrix}
          e^{-\pi i\theta_{1}} & m_1 \\ 0 & e^{\pi i\theta_{1}} \\
        \end{pmatrix} ,
  \label{SSE_MA:c}
\end{gather}
with upper triangular elements of the form
\begin{gather}
  m_0 =(-1)^N2ir\frac{\sin\pi2\mu}{\sin^2\pi(2\mu+2\omega_{1})}
  \bigg\{ -\frac{r}{\hat{s}}\sin\pi(\mu+\bar{\omega})
          +\sin\pi(2\mu+2\omega_{1})\sin\pi(\mu+\omega) \bigg\} ,
  \label{SSE_MA:d}\\
  m_t =(-1)^N2ir\frac{1}{\sin^2\pi(2\mu+2\omega_{1})}
  \bigg\{ \frac{r}{\hat{s}}e^{-\pi i(2\mu+2\omega_{1})}\sin\pi 2\mu\sin\pi(\mu+\bar{\omega})
  \label{SSE_MA:e}\\
      \phantom{(-1)^N2ir\frac{1}{\sin^2\pi(2\mu+2\omega_{1})}}
          +\sin\pi(2\mu+2\omega_{1})\sin\pi 2\omega_1\sin\pi(\mu+\omega) \bigg\} ,
  \nonumber\\
  m_1 =-2ire^{-\pi i(\mu+\bar{\omega})}\sin\pi(\mu+\omega) ,
  \label{SSE_MA:f}
\end{gather}
where $ r $ is a non-zero arbitrary constant.
\end{theorem}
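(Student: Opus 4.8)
The plan is to extract the monodromy data by comparing the known expansion (\ref{AN_exp_1}) of the spectral average about $t=1$ with the universal isomonodromic $\tau$-function expansion (\ref{PVI_tExp_0}) of Section~2, using the fact established in \cite{FW_2004} that $\mathcal{A}_N(t)$ is a \PVI $\tau$-function. Since Jimbo's expansion is centred at the coalescence $t\to 0$ of the pair $\{0,t\}$ while the physical average degenerates as $t\to 1$, I would first apply the B\"acklund symmetry $\lambda\mapsto 1-\lambda$, $t\mapsto 1-t$, which interchanges the fixed points $0\leftrightarrow 1$ and converts the $t\to 1$ behaviour into a standard small-argument expansion in $x=1-t$. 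In these recast labels the matching of (\ref{AN_exp_1}) against (\ref{PVI_tExp_0}) is term by term: the constant Gamma-product prefactor in (\ref{AN_exp_1}) together with the overall power $x^{(\sigma^2-\theta_0^2-\theta_t^2)/4}$ fixes the undetermined constant in (\ref{PVI_tExp_0}) and the $\tau$-function normalisation, and is consistent with the formal exponents (\ref{SSE_Mexp:A}), while the accompanying classical condition $\theta_0-\theta_t+\theta_1-\theta_\infty=2N\in 2\Z$ is verified directly.

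Next I would read off the transcendental data. The unique non-analytic power appearing in (\ref{AN_exp_1}), namely $(1-t)^{1+2\mu+2\omega_1}$, must match the $t^{1+\sigma}$ term of (\ref{PVI_tExp_0}); this forces $\sigma_{0t}=2\mu+2\omega_1$, i.e. (\ref{SSE_MI}). The absence of the companion $t^{1-\sigma}$ term is not accidental: its coefficient in (\ref{PVI_tExp_0}) carries the factor $\theta_\infty^2-(\theta_1+\sigma)^2=(\theta_\infty-\theta_1-\sigma)(\theta_\infty+\theta_1+\sigma)$, and with the above assignments $\theta_\infty-\theta_1-\sigma=-\epsilon\to 0$, so this term is suppressed precisely in the limit $\epsilon:=\sigma_{0t}+\theta_1-\theta_\infty\to 0$. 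Matching instead the coefficient of the surviving $(1-t)^{1+\sigma}$ term and substituting the $\hat s$--$s$ relation (\ref{PVI_s_0t}) then pins down $\hat s$ as in (\ref{SSE_st1:A}) and yields the rescaled coefficient $s_{0t}=\epsilon\pi\hat s/2$.

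The upper-triangular shape of the monodromy matrices follows from the classical condition. Because $\theta_0-\theta_t+\theta_1-\theta_\infty\in 2\Z$, the solution is a classical (Riccati/Gauss) solution, for which the monodromy representation is reducible; the four matrices then share a common eigenvector and can be simultaneously brought to the upper-triangular forms (\ref{SSE_MA:a})--(\ref{SSE_MA:c}), with diagonal entries fixed as the eigenvalues $e^{\mp\pi i\theta_\nu}$. To obtain the off-diagonal entries $m_0,m_t,m_1$ of (\ref{SSE_MA:d})--(\ref{SSE_MA:f}) I would evaluate Jimbo's explicit matrices (\ref{PVI_M0}), (\ref{PVI_Mt}), (\ref{PVI_M1}) in the same limit, after the change of basis aligning the common eigenvector with $(1,0)^{\mathrm T}$; here one uses the vanishing $\sin\frac{\pi}{2}(\theta_0-\theta_t-\sigma)=\sin\pi N=0$ to cancel the would-be divergent $s^{-1}$ entries against $s_{0t}=\epsilon\pi\hat s/2\to 0$, leaving the residual gauge parameter $r$ free in each $m_\nu$.

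The main obstacle is exactly this degenerate limit $\epsilon\to 0$, which violates the generic condition (\ref{PVI_conditions:c}) (one has $\theta_\infty-\theta_1-\sigma_{0t}\to 0\in 2\Z$) under which (\ref{PVI_tExp_0}), (\ref{PVI_s_0t}) and Jimbo's parameterisation were derived. Two exponents merge, several Gamma factors in (\ref{PVI_s_0t}) approach their poles, and the matrix entries develop $0\cdot\infty$ indeterminacies of the form $s^{-1}\sin\frac{\pi}{2}(\theta_0-\theta_t-\sigma)$; one must show that all of these conspire to give finite limits, with the rescaled coefficient $\hat s$ finite and nonzero while $s_{0t}\to 0$. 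Controlling this resonant coalescence rigorously---rather than by the formal limiting argument---is the delicate point, and is precisely the \emph{suitable limiting procedure} conjectured after (\ref{PVI_conditions:c}). Once it is in place, the connection formula of the first Lemma (\ref{PVI_pSoln}) furnishes the consistency check relating the recast $\{\sigma_{0t},s_{0t}\}$ back to the invariants $\sigma_{t1},\sigma_{01}$ of the original $t\to 1$ frame.
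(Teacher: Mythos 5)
Your proposal is correct and takes essentially the same route as the paper's proof: the fractional linear transformation $t\mapsto 1-t$ recasting the invariants, term-by-term matching of (\ref{AN_exp_1}) against Jimbo's expansion (\ref{PVI_tExp_0}) and the relation (\ref{PVI_s_0t}) to force $\sigma_{0t}=2\mu+2\omega_1$, the vanishing $s_{0t}=\epsilon\pi\hat{s}/2$ and formula (\ref{SSE_st1:A}), followed by evaluating Jimbo's parameterisation (\ref{PVI_M0},\ref{PVI_Mt},\ref{PVI_M1},\ref{PVI_C}) in the resonant limit to obtain the upper triangular matrices (the paper additionally verifies the cyclic identity on the off-diagonal entries, while you invoke Lemma (\ref{PVI_pSoln}) as the consistency check). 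Your explicit flagging of the degenerate limit violating (\ref{PVI_conditions:c}) is precisely the caveat the authors themselves acknowledge as a conjectured limiting procedure, so nothing essential is missing.
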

\begin{proof}
In \cite{FW_2006a} it was noted that the monodromy exponents could be given by any 
one of three sets
\begin{equation}
	\{\theta_0,\theta_t,\theta_1,\theta_{\infty}\} =
	\begin{cases} 
	        N+2\mu,N+2\omega_1,\mu+\omega,\mu+\bar{\omega} \\
	        N,N+2\mu+2\omega_1,\mu-\omega,\mu-\bar{\omega} \\
	        N+\mu+\omega,N+\mu+\bar{\omega},-2\mu,2\omega_1 
	\end{cases} ,
\end{equation}
modulo permutations of the exponents, an even number of sign reversals and subject to
some constraints which we elaborate on later. It was found that the first set lead to
upper triangular monodromy matrices, the second to full monodromy matrices except for
one which was a multiple of the identity and the third to lower triangular matrices.
The second set strictly violates the condition of non-resonant monodromy exponents 
and the third set is essentially equivalent to the first and therefore we choose an
example from the first set. These constraints, when applied to the first set,
imply that our choices are now
\begin{equation}
	\{\theta_0,\theta_t,\theta_1,\theta_{\infty}\} =
	\begin{cases} 
	       \pm(\mu+\omega),\pm(N+2\mu),\pm(N+2\omega_1),\pm(\mu+\bar{\omega}) \\
	       \pm(\mu+\bar{\omega}),\pm(N+2\omega_1),\pm(N+2\mu),\pm(\mu+\omega) \\
	       \pm(\mu+\omega),\pm(N+2\mu),\pm(N+2\omega_1),\pm(\mu+\bar{\omega}) \\
	       \pm(\mu+\bar{\omega}),\pm(N+2\omega_1),\pm(N+2\mu),\pm(\mu+\omega)
	\end{cases} ,
\label{SSE_choice}
\end{equation}
with an even number of sign reversals. The monodromy invariants were given as
$ \sigma_{0t}=N-\mu+\bar{\omega} $,
$ \sigma_{t1}=2\mu+2\omega_1 $,
$ \sigma_{01}=N-\mu+\omega  $.

Next we employ the linear fractional transformation of our original system
$ t \mapsto 1-t $, $ \theta_0 \leftrightarrow \theta_1 $,
$ \sigma_{0t} \mapsto \sigma_{t1} $, $ \sigma_{t1} \mapsto \sigma_{0t} $,
and it is this new system to which we shall refer to henceforth.
We take one choice from the above (\ref{SSE_choice}) which is applicable to the theory
given in \cite{Ki_1994} and this is (\ref{SSE_Mexp:A}) and (\ref{SSE_MI}).
Having made this choice we observe that in order for the Jimbo parameterisation
given in (\ref{PVI_tExp_0}) and (\ref{PVI_s_0t}) to be consistent with (\ref{AN_exp_1})
under the map $ t\mapsto 1-t $ then $ s_{0t} $ must vanish in the manner given in
Theorem (\ref{SSE}). The resulting finite coefficient $ \hat{s} $ is then given by formula 
(\ref{SSE_st1:A}). It can be verified also that the overall prefactors and the leading
order analytic terms in both expansions agree precisely with the choices we have
made. The monodromy matrices are then given by (\ref{SSE_MA:a}-\ref{SSE_MA:f}) using
the parameterisation in (\ref{PVI_M1},\ref{PVI_Mt},\ref{PVI_M0},\ref{PVI_C}).
The off diagonal elements satisfy the relation
$ e^{-\pi i\theta_{0}}m_{0}+e^{-\pi i\theta_{t}}m_{t}
 +e^{-\pi i(\theta_{0}+\theta_{t}-\theta_{\infty})}m_{1} = 0 $, as required by the
cyclic identity (\ref{PVI_cyclic}).
\end{proof}

\section{Isomonodromy Deformation Formulation for \PV}
\setcounter{equation}{0}
The work of Jimbo \cite{Ji_1982} has a slightly different formulation of the
isomonodromy problem for \PV from that of Kitaev and collaborators 
\cite{Ki_1994,AK_1997,AK_2000} and it is the latter form that we adopt.
We formulate the \PV isomonodromic system as the Lax pair of linear $ 2\times 2 $ 
matrix ODEs for
$ \Psi(\lambda;t) $ 
with two regular singular points at $ \nu = 0,1 $ and an irregular one at
$ \infty $ with Poincare rank unity
\begin{equation}
   \frac{d}{d\lambda}\Psi 
  = \left( \frac{t}{2}\sigma_3+\frac{A_{0}}{\lambda}+\frac{A_{1}}{\lambda-1}
      \right) \Psi, \qquad
   \frac{d}{dt}\Psi 
  = \left\{ \frac{\lambda}{2}\sigma_3
              +\frac{1}{t}\left(\frac{\theta_{\infty}}{2}\sigma_3+A_{0}+A_{1}\right)
      \right\} \Psi.
\label{PV_linear}
\end{equation}
We require that the residue matrices satisfy
$ {\rm diag}(A_{0}+A_{1}) = -\frac{\theta_{\infty}}{2}\sigma_3 $,
$ \theta_{\infty} \in \C\backslash\Z $,
and the constraints
$ {\rm tr} A_{\nu} = 0 $, $ \det A_{\nu} = -\frac{1}{4} \theta^2_{\nu} $,
$ \nu \in\{0,1\} $.
Again we assume that there exists invertible matrices $ R_{\nu} \in SL(2,\C) $ such that
$ R^{-1}_{\nu}A_{\nu}R_{\nu} = \frac{1}{2}\theta_{\nu}\sigma_3 $,
$ \theta_{\nu} \in \C\backslash\Z $, $ \nu \in\{0,1\} $.

Within the works of Kitaev and collaborators there are two different conventions 
for the monodromy data
employed and we will make the distinction between the two by using a car\'et 
for those of \cite{Ki_1994} as opposed to those of \cite{AK_1997,AK_2000}.
In the neighbourhood of the irregular singularity the canonical solutions 
$ \Psi^k(\lambda) $ can be expanded as
\begin{equation}
   \Psi^k(\lambda) \sim \Big(I+ \sum^{\infty}_{m=1} \Psi^k_{m\infty}\lambda^{-m} \Big)
   \exp\left\{ \big(\frac{t}{2}\lambda - \frac{1}{2}\theta_{\infty}\log\lambda 
               \big)\sigma_3 \right\}.
\label{PV_irregE}
\end{equation}
in the sectors $ -3\pi/2 + \pi k < \arg \lambda t < \pi/2 + \pi k, \; k = 1,2 $.
The Stokes matrices $ S_k $ are defined as
$ \Psi^{k+1}(\lambda) = \Psi^k(\lambda) \hat{S}_k $,
which have the structure
\begin{equation}
  \hat{S}_{2l} = \left( \begin{array}{cc} 1 & 0 \\ \hat{s}_{2l} & 1 \end{array} \right),
  \quad
  \hat{S}_{2l+1} = \left( \begin{array}{cc} 1 & \hat{s}_{2l+1} \\ 0 & 1 \end{array} \right),
\label{PV_Stokess}
\end{equation}
where $ \hat{s}_k $ are the Stokes multipliers.
The monodromy matrix $ \hat{M}_{k\infty} $ is given by
$ \hat{M}_{k\infty} = \hat{S}_{k}\hat{S}_{k+1}e^{\pi i\theta_{\infty}\sigma_3} $,
$ C_{\infty} = I $, and we set $ \hat{M}_{\infty} =  \hat{M}_{0\infty} $. 
The monodromy matrices at the 
regular singularities are defined in the same way as in (\ref{PVI_mon}). The
cyclic relation in this case is 
\begin{equation}
  \hat{M}_{0}\hat{M}_{1}\hat{M}_{\infty} = I , 
\label{PV_cyclicH}
\end{equation}
reversed in order from the usual convention because of the nature of the limiting 
transition we are to consider later. The monodromy data preserved here are 
$ {\mathcal M}_{\rm V}:=\{ \theta_{\nu}, C_{\nu}, \hat{M}_{\nu}, \hat{s}_1, \hat{s}_2 \} $.
If we define the monodromy invariant 
$ 2\cos\pi\sigma = {\rm Tr}(\hat{M}_{0}\hat{M}_{1}) $,
then the following constraint, analogous to (\ref{PVI_manifold}), applies
\begin{equation}
   \hat{s}_0\hat{s}_1e^{-\pi i\theta_{\infty}} 
   = 4\sin\frac{\pi}{2}(\theta_{\infty}+\sigma)\sin\frac{\pi}{2}(\theta_{\infty}-\sigma) .
\label{PV_manifold}
\end{equation}
In contrast the cyclic relation adopted in \cite{AK_1997,AK_2000} is
\begin{equation}
  M_{\infty}M_{1}M_{0} = I , 
\label{PV_cyclic}
\end{equation}
where
\begin{equation} 
  M_{\infty} \equiv M_{2\infty} = S_2 e^{\pi i\theta_{\infty}\sigma_3}S_1 .
\label{PV_MInf}
\end{equation}
The inter-relationship between the two sets of monodromy data is given by
\begin{gather}
  \hat{S}_0 = S_1, \qquad \hat{S}_1 = S_2 \\
  \hat{M}_0 = S_{1}M_{1}M_{0}M_{1}^{-1}S_{1}^{-1}, \quad 
  \hat{M}_1 = S_{1}M_{1}S_{1}^{-1}, \quad 
  \hat{M}_{\infty} = S_{1}M_{\infty}S_{1}^{-1} .
\label{PV_Mxfm}
\end{gather}

The $\sigma$-function in the Jimbo formulation of the \PV linear system is equivalent
to that of Andreev and Kitaev \cite{AK_2000} and is defined as
\begin{equation}
   \zeta(t) =
   t\frac{d}{dt}\log\tau
   +\frac{1}{2}(\theta_{0}+\theta_{\infty})t
   +\frac{1}{4}[(\theta_{0}+\theta_{\infty})^2-\theta^2_{1}] ,
\label{PV_sigma}
\end{equation}
which satisfies the second-order second degree differential equation
\begin{multline}
  \left(t\frac{d^2}{dt^2}\zeta\right)^2
 =\left[ \zeta-t\frac{d}{dt}\zeta+2\left(\frac{d}{dt}\zeta\right)^2
         -(2\theta_{0}+\theta_{\infty})\frac{d}{dt}\zeta
  \right]^2 \\
 -4\frac{d}{dt}\zeta
   \left(\frac{d}{dt}\zeta-\theta_{0}\right)
   \left(\frac{d}{dt}\zeta-\frac{1}{2}(\theta_{0}-\theta_{1}+\theta_{\infty})\right)
   \left(\frac{d}{dt}\zeta-\frac{1}{2}(\theta_{0}+\theta_{1}+\theta_{\infty})\right) .
\label{PV_SF}
\end{multline}
The asymptotic expansion of the $\tau$-function is given by the following theorem.
\begin{theorem}[\cite{Ji_1982}]\label{PV_Exp}
Under the conditions
$ \theta_0, \theta_1 \notin \mathbb{Z} $,
$ 0 < \Re(\sigma) < 1 $ and
$ \theta_1\pm\theta_0\pm\sigma, \theta_{\infty}\pm\sigma \notin 2\mathbb{Z} $
then we have the asymptotic expansion of the $\tau$-function as $ t \to 0 $
\begin{multline}
  \tau(t) \sim const\cdot t^{(\sigma^2-\theta^2_{\infty})/4}
       \Bigg\{ 1 
               - \frac{\theta_{\infty}(\theta^2_1-\theta^2_0+\sigma^2)}
                     {4\sigma^2}t \\
   - \hat{s}\frac{[\theta_{\infty}-\sigma][\theta^2_0-(\theta_1-\sigma)^2]}
                 {8\sigma^2(1+\sigma)^2}t^{1+\sigma} \\
   - \hat{s}^{-1}\frac{[\theta_{\infty}+\sigma][\theta^2_0-(\theta_1+\sigma)^2]}
                 {8\sigma^2(1-\sigma)^2}t^{1-\sigma}
   + {\rm O}(|t|^{2(1-\Re(\sigma))}) \Bigg\} ,
\label{PV_tExp_0}
\end{multline}
where $ \sigma \neq 0 $ and $ \hat{s} $ are related to $ s $ through
\begin{multline}
  \hat{s} = s \frac{\Gamma^2(1-\sigma)\Gamma(1+\frac{1}{2}(\theta_1+\theta_0+\sigma))
                    \Gamma(1+\frac{1}{2}(\theta_1-\theta_0+\sigma))}
                   {\Gamma^2(1+\sigma)\Gamma(1+\frac{1}{2}(\theta_1+\theta_0-\sigma))
                    \Gamma(1+\frac{1}{2}(\theta_1-\theta_0-\sigma))} \\
        \times\frac{\Gamma(1+\frac{1}{2}(\theta_{\infty}+\sigma))}
                   {\Gamma(1+\frac{1}{2}(\theta_{\infty}-\sigma))} .
\label{PV_s_0t}
\end{multline}
\end{theorem}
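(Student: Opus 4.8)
The plan is to derive Theorem~\ref{PV_Exp} not by redoing Jimbo's connection analysis for the \PV linear system (\ref{PV_linear}) from scratch, but by pushing the already-established \PVI expansion (\ref{PVI_tExp_0})--(\ref{PVI_s_0t}) through the coalescence limit \PVI${}\to{}$\PV of Kitaev \cite{Ki_1994}, in which two of the four regular singular points of (\ref{PVI_linear}) merge into the rank-one irregular point of (\ref{PV_linear}). Concretely, I would introduce a confluence parameter $\epsilon$ and send a pair of \PVI exponents to infinity while keeping their difference finite, with the correspondence
\[
  \theta_0^{\V}=\theta_{\infty}^{\VI},\quad
  \theta_1^{\V}=\theta_1^{\VI},\quad
  \theta_{\infty}^{\V}=\theta_t^{\VI}-\theta_0^{\VI},\quad
  \theta_0^{\VI}+\theta_t^{\VI}\sim \tfrac{2}{\epsilon}\to\infty ,
\]
the invariant $\sigma=\sigma_{0t}$ being preserved (note that $\sigma_{0t}^{\VI}=\sigma_{1\infty}^{\VI}$ by the cyclic relation (\ref{PVI_cyclic}), which becomes the \PV invariant around the surviving pair $\{0,1\}$), together with a rescaling $t^{\VI}\sim\epsilon\,t^{\V}$ of the independent variable and a compensating rescaling of the monodromy coordinate $s$. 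The diverging powers of $\epsilon$ are to be absorbed into the undetermined prefactor $const$.

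The second step is a term-by-term verification that (\ref{PVI_tExp_0}) collapses onto (\ref{PV_tExp_0}) under this substitution. Each bracketed quadratic factor in the \PVI coefficients splits into two linear factors; under the correspondence one stays finite and reproduces the \PV bracket verbatim, e.g. $[(\theta_{\infty}^{\VI})^2-(\theta_1^{\VI}-\sigma)^2]\to[\theta_0^2-(\theta_1-\sigma)^2]$, while the other involves the diverging sum $\theta_0^{\VI}+\theta_t^{\VI}$ and, once the $\epsilon$-scaling is stripped, yields a single linear \PV factor $[\theta_{\infty}\mp\sigma]$; this is exactly the mechanism that converts the $16\sigma^2$ denominators of (\ref{PVI_tExp_0}) into the $8\sigma^2$ denominators of (\ref{PV_tExp_0}), and the analytic $t^1$ and the $t^{1\pm\sigma}$ coefficients are checked identically. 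For the Gamma-function dictionary I would use Stirling's formula: the two $\theta_1^{\VI}$-dependent ratios in (\ref{PVI_s_0t}) reduce to the single $\theta_0,\theta_1$ ratio of (\ref{PV_s_0t}), the ratio built from the diverging argument collapses to the lone $\Gamma(1+\tfrac12(\theta_{\infty}+\sigma))/\Gamma(1+\tfrac12(\theta_{\infty}-\sigma))$ of (\ref{PV_s_0t}), and the residual divergent quotient behaves like $\epsilon^{-\sigma}$, which is matched by the relative $\epsilon$-rescaling of $\hat s$ against $s$, leaving precisely the \PV relation (\ref{PV_s_0t}). I would also flag one piece of bookkeeping here: the naive substitution sends $t^{(\sigma^2-\theta_0^2-\theta_t^2)/4}$ to a power of $t^{\V}$ that is $\epsilon$-divergent and numerically wrong, so the leading exponent is not settled until one tracks the explicit elementary prefactor $\tau^{\VI}=G(\epsilon,t)\,\tau^{\V}$ forced by the differing normalisations (\ref{PVI_tau})/(\ref{PVI_sigma}) versus (\ref{PV_sigma}); $G$ is a power-times-exponential carrying the $e^{t\lambda/2}$ structure of the irregular point, and verifying that it repairs the power to $t^{(\sigma^2-\theta_{\infty}^2)/4}$ is a routine computation through the $\sigma$-function relations.

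The genuine difficulty is analytic rather than algebraic: a formal term-by-term limit does not by itself establish that the limiting series is the asymptotic expansion of the \PV $\tau$-function, with the stated uniform remainder $O(|t|^{2(1-\Re(\sigma))})$ on the sector. Justifying the interchange of the $\epsilon\to0$ coalescence with the $t\to0$ asymptotics requires the rigorous limit-transition theory of \cite{Ki_1994} --- uniform control, as $\epsilon\to0$, of the solutions of the linear system and of their connection matrices --- and this is the step I expect to consume most of the effort. I would keep in reserve the direct route of \cite{Ji_1982}, namely solving the connection problem for (\ref{PV_linear}) between $\lambda=0,1$ and the irregular point and expanding the Hamiltonian form of the $\tau$-function; this reproves the statement independently but does not exploit the \PVI machinery already assembled, and so is less natural within the present framework.
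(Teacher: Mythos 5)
Your proposal and the paper part company at the outset: the paper does not prove Theorem~\ref{PV_Exp} at all --- it is imported verbatim from Jimbo \cite{Ji_1982}, where it is established by direct analysis of the \PV linear problem (\ref{PV_linear}) (local solutions at $\lambda=0,1$ and at the irregular point, the connection problem, and the resulting small-$t$ expansion of the $\tau$-function). Your confluence route from (\ref{PVI_tExp_0})--(\ref{PVI_s_0t}) is therefore genuinely different; it is close in spirit to what the paper actually does in Sections~5--6, but there Kitaev's limit transition II is used only to transport \emph{monodromy data} (Theorem~\ref{LimitII}) and to cross-check Theorem~\ref{Bulk_x=0Mdata} --- never to rederive Jimbo's \PV expansion. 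On the algebraic side your dictionary also has the wrong combination diverging: in the paper's limit II, (\ref{iReg_coal_theta:a})--(\ref{iReg_coal_theta:b}) give $\theta_{0{\VI}}+\theta_{t{\VI}}=\theta_{\infty{\V}}$ \emph{finite} and $\theta_{0{\VI}}-\theta_{t{\VI}}\sim 2/\epsilon$, and with that orientation the matching is clean: $\theta_{0}^2-\theta_{t}^2\approx 2\theta_{\infty}/\epsilon$ reproduces the analytic $t$-coefficient with the correct sign, $[\theta_{0}^2-(\theta_{t}\mp\sigma)^2]\approx(2/\epsilon)(\theta_{\infty}\mp\sigma)$ converts the $16\sigma^2$ into the $8\sigma^2$ denominators, and $\hat{s}_{\V}=\epsilon^{\sigma}\hat{s}_{\VI}$ with no finite renormalisation. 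With your stated assignment ($\theta_{\infty}^{\V}=\theta_{t{\VI}}-\theta_{0{\VI}}$ finite, sum $\sim2/\epsilon$) the coefficient of $t$ comes out with the wrong sign; flipping the sign repairs that but then the $t^{1\pm\sigma}$ brackets produce $(\theta_{\infty}\pm\sigma)$ interchanged, forcing an extra finite factor $(\theta_{\infty}+\sigma)/(\theta_{\infty}-\sigma)$ into the $\hat{s}$ rescaling that must then be re-extracted from the Stirling analysis of (\ref{PVI_s_0t}). This is fixable, but it shows the ``routine bookkeeping'' is where such an argument typically goes wrong.

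The genuine gap is the one you flag but do not close, and the machinery you invoke does not close it. Jimbo's \PVI theorem holds for \emph{fixed} exponents, with the implied constant in ${\rm O}(|t|^{2(1-\Re(\sigma))})$ depending on $\theta_{0},\theta_{t}$; in your regime these diverge like $1/\epsilon\asymp t_{\VI}^{-1}$ at fixed $t_{\V}$, which is exactly the borderline at which every term of (\ref{PVI_tExp_0}) contributes at order one, so the stated remainder controls nothing after substitution. Kitaev's Theorem~\ref{LimitII}, on which you rely for the interchange of limits, is a statement about convergence of the Stokes/monodromy data and of solutions of the linear system along the \emph{discrete} Schlesinger sequence $\theta_{\nu}\mapsto\theta_{\nu}+2n$ at fixed $t_{\V}$, subject to the solvability hypotheses (iv)--(v); it supplies no uniform-in-$\epsilon$ error bound for the $t\to0$ asymptotics of the $\tau$-function. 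Closing the gap would demand a new uniform double-scaling analysis of essentially the same difficulty as Jimbo's direct proof --- which is to say your ``reserve'' route is in fact the proof of record, and the confluence computation is best presented as what the paper uses it for: a consistency check on the monodromy data, not a proof of Theorem~\ref{PV_Exp}.
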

\vfill\eject

\begin{sideways}
\begin{minipage}[c][12cm][c]{20cm}{
When $ \sigma \neq \Z $ we have the parameterisation of the \PV monodromy matrices employed 
in \cite{Ji_1982}, and in Theorem 6.2 of \cite{AK_2000}
\begin{equation}
  DM_{0}D^{-1} = \frac{1}{i\sin\pi\sigma}
  \begin{pmatrix}
    e^{\pi i\sigma}\cos\pi\theta_{0}-\cos\pi\theta_{1}
    &
    -2rse^{-\pi i\sigma}\sin\frac{\pi}{2}(\theta_{1}-\theta_{0}-\sigma)
                                  \sin\frac{\pi}{2}(\theta_{1}+\theta_{0}-\sigma)
  \cr
 2(rs)^{-1}e^{\pi i\sigma}\sin\frac{\pi}{2}(\theta_{1}-\theta_{0}+\sigma)
                                  \sin\frac{\pi}{2}(\theta_{1}+\theta_{0}+\sigma)
    &
    -e^{-\pi i\sigma}\cos\pi\theta_{0}+\cos\pi\theta_{1}
  \end{pmatrix} ,
\label{PV_M0}
\end{equation}
\begin{equation}
  DM_{1}D^{-1} = \frac{1}{i\sin\pi\sigma}
  \begin{pmatrix}
    e^{\pi i\sigma}\cos\pi\theta_{1}-\cos\pi\theta_{0}
    &
     2rs\sin\frac{\pi}{2}(\theta_{1}-\theta_{0}-\sigma)
                                  \sin\frac{\pi}{2}(\theta_{0}+\theta_{1}-\sigma)
  \cr
-2(rs)^{-1}\sin\frac{\pi}{2}(\theta_{1}-\theta_{0}+\sigma)
                                  \sin\frac{\pi}{2}(\theta_{0}+\theta_{1}+\sigma)
    &
    -e^{-\pi i\sigma}\cos\pi\theta_{1}+\cos\pi\theta_{0}
  \end{pmatrix} ,
\label{PV_M1}
\end{equation}
where
\begin{equation}
  D = 
  \begin{pmatrix}
    e^{-\pi i\sigma/2}
    &
    r\sin\frac{\pi}{2}(\theta_{\infty}+\sigma)
    \cr
    r^{-1}e^{\pi i\sigma/2}
    &
    \sin\frac{\pi}{2}(\theta_{\infty}-\sigma)
  \end{pmatrix} .
\label{PV_D}
\end{equation}
The Stokes multipliers are given by the formulae
\begin{equation} 
  s_1 = -
  \frac{2\pi ir^{-1}}{\Gamma(1-\frac{\sigma-\theta_{\infty}}{2})\Gamma(\frac{\sigma+\theta_{\infty}}{2})} ,
  \qquad
  s_2 = -e^{\pi i\theta_{\infty}}
  \frac{2\pi ir}{\Gamma(1-\frac{\sigma+\theta_{\infty}}{2})\Gamma(\frac{\sigma-\theta_{\infty}}{2})} ,
\label{PV_StokesM}
\end{equation} 
The parameter $ r $ is again an arbitrary non-zero complex constant and doesn't 
appear in the expansion formulae.
}
\end{minipage} 
\end{sideways}
\vfill\eject

\section{The Bulk Scaling Ensemble}
\setcounter{equation}{0}
Also studied in \cite{FW_2004} was the scaling limit of the spectrum singularity
ensemble in the neighbourhood of the singularity at $ \phi = 0 $ ($ t=1 $), 
to the bulk regime via the limit $ N \to \infty $. It was shown there that the 
log-derivative of the average $ \mathcal{A}_N(t) $ converged to a solution of the 
Jimbo-Miwa-Okamoto 
$\sigma$-function for the fifth Painlev\'e equation. Thus the most general 
universality class of hermitian random matrix ensembles in the bulk scaling limit 
was found to be a generic four parameter class - three arbitrary parameters, 
$ \omega_1,\omega_2,\mu $,
appearing in the differential equation and one, $ \xi^* $, in the boundary data.
Thus this class is characterised by a transcendental solution to a generic fifth
Painlev\'e equation. 

The establishment of this result was based on a formal scaling limit of the 
\PVI second-order second-degree ordinary differential equation (\ref{PVI_SF}) 
to the corresponding \PV ODE (\ref{PV_SF}). 
Defining the scaling variables
\begin{equation}
  t:=e^{-x/N}, \quad
  u(x;\omega_1,\omega_2,\mu;\xi^*) := x\frac{d}{dx} \lim_{N \to \infty} 
            \log\mathcal{A}_N(t;\omega_1,\omega_2,\mu;\xi^*),
\label{SSE_bulk}
\end{equation}
it was found that $ u(x) $ is related to a solution of an 
alternative Jimbo-Miwa-Okamoto 
$\sigma$-form of the fifth Painlev\'e equation $ h_{\V}(s) $ by
\begin{equation}
   u(ix;\omega_1,\omega_2,\mu) = h_{\V}(x;{\bf v})
   + (-\frac{1}{4} \omega - \frac{3}{4} \bar{\omega}+\mu)x
     + \eighth (\omega-\bar{\omega})^2-\mu(\omega+\bar{\omega}),
\label{Bulk_sigma}
\end{equation}
with the Okamoto parameters
\begin{equation}
   (v_1,v_2,v_3,v_4) = 
    ( \omega_1-\frac{1}{2} i\omega_2, -\omega_1-\frac{1}{2} i\omega_2,
               \mu+\frac{1}{2} i\omega_2, -\mu+\frac{1}{2} i\omega_2 ).
\label{Bulk_param}
\end{equation}
The alternative Jimbo-Miwa-Okamoto $\sigma$-form of the fifth Painlev\'e 
equation (\ref{PV_SF}) is
\begin{equation}
  (xh_{\V}'')^2-[h_{\V}-xh_{\V}'+2(h_{\V}')^2]^2
  +4 \prod^4_{k=1}(h_{\V}'+v_k) = 0,
\label{JMO_PV}
\end{equation}
where the constraint $ v_1+v_2+v_3+v_4=0 $ applies.
The scaled spectral average is a $\tau$-function for this system and related
to $ u(x) $ by
\begin{equation}
   \mathcal{A}(x) = \exp \int^x_0 \frac{dy}{y} u(y;\omega_1,\omega_2,\mu) . 
\end{equation}

We have the following expansion of $ \mathcal{A}(x) $ about $x=0$ under the 
above formal bulk scaling limit.
\begin{theorem}\label{Bulk_scaling}
Under the conditions 
$ \Re(2\mu), \Re(2\omega_1), \Re(\mu+\omega), \Re(\mu+\bar{\omega}) >-1 $, 
$ 0 \leq \Re(2\mu+2\omega_1) <1 $, 
the spectral average $ \mathcal{A}_N(t;\omega_1,\omega_2,\mu;\xi^*) $
has a bulk scaling limit with $ t\mapsto \exp(-x/N) $ as $ N \to \infty $
\begin{equation}
  \prod^{N-1}_{k=0}\frac{\Gamma(1+k+\mu+\omega)\Gamma(1+k+\mu+\bar{\omega})
                        }{ k!\Gamma(2\mu+2\omega_1+k+1)} 
   \mathcal{A}_N(t;\omega_1,\omega_2,\mu;\xi^*)
  \mathop{\sim}\limits_{N \to \infty} \mathcal{A}(x) ,
\label{SSE_limit}
\end{equation}
and the scaled $\mathcal{A}$-function has the following expansion as $ x \to 0 $
\begin{multline}
  \mathcal{A}(x) =
  1 + \frac{\mu(\bar{\omega}-\omega)}{2\mu+2\omega_1}x+{\rm O}(x^2) \\
  + \frac{1}{\pi}\left(\xi^*\frac{e^{-\pi i(\mu-\bar{\omega})}}{2i}
            +\frac{\sin\pi 2\mu\sin\pi(\mu+\omega)}{\sin\pi(2\mu+2\omega_1)} \right) \\
  \times
  \frac{\Gamma(1+2\mu)\Gamma(1+2\omega_1)\Gamma(1+\mu+\omega)\Gamma(1+\mu+\bar{\omega})
       }{\Gamma^2(2\mu+2\omega_1+2)\Gamma(2\mu+2\omega_1+1)}
         x^{1+2\mu+2\omega_1}\left(1+{\rm O}(x)\right) \\
  +{\rm O}(x^{2+4\mu+4\omega_1}) .
\label{Bulk_tau}
\end{multline}
\end{theorem}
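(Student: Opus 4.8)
The plan is to obtain the scaled expansion (\ref{Bulk_tau}) directly from the finite-$ N $ expansion (\ref{AN_exp_1}) of Section 3, which already encodes the \PVI monodromy data of Theorem~\ref{SSE}, by substituting $ t\mapsto e^{-x/N} $ and letting $ N\to\infty $ term by term. Since the bulk scaling sends $ t=e^{-x/N}\to1 $, it is precisely the expansion about $ t=1 $ that is needed, and (\ref{AN_exp_1}) supplies it. First I would divide (\ref{AN_exp_1}) by the Barnes-type prefactor $ \prod_{k=0}^{N-1}k!\,\Gamma(2\mu+2\omega_1+k+1)/[\Gamma(1+k+\mu+\omega)\Gamma(1+k+\mu+\bar{\omega})] $; this is exactly the normalisation appearing on the left of (\ref{SSE_limit}), so the product cancels and only the brace $ \{1+\dots\} $ survives. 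Writing $ 1-t=1-e^{-x/N}=(x/N)(1+{\rm O}(x/N)) $, the limit (\ref{SSE_limit}) is then realised by controlling each term of the brace separately.

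For the analytic part, the leading correction $ \tfrac{N\mu(\bar{\omega}-\omega)}{2\mu+2\omega_1}(1-t) $ becomes $ \tfrac{N\mu(\bar{\omega}-\omega)}{2\mu+2\omega_1}\cdot\tfrac{x}{N}=\tfrac{\mu(\bar{\omega}-\omega)}{2\mu+2\omega_1}x $, reproducing the first correction term of (\ref{Bulk_tau}); the $ {\rm O}((1-t)^2) $ remainder then yields $ {\rm O}(x^2) $ provided the coefficient of $ (1-t)^k $ grows no faster than $ N^k $, as its leading instance already indicates. The decisive computation is the non-analytic term, where $ (1-t)^{1+2\mu+2\omega_1}\mapsto N^{-(1+2\mu+2\omega_1)}x^{1+2\mu+2\omega_1} $. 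Here the only $ N $-dependent part of the coefficient is $ (-1)^{N+1}/[\Gamma(N)\Gamma(-N-2\mu-2\omega_1)] $, and I would evaluate its product with $ N^{-(1+2\mu+2\omega_1)} $ using the reflection formula $ \Gamma(-N-2\mu-2\omega_1)\Gamma(1+N+2\mu+2\omega_1)=\pi/\sin\pi(-N-2\mu-2\omega_1) $ together with $ \sin\pi(-N-2\mu-2\omega_1)=(-1)^{N+1}\sin\pi(2\mu+2\omega_1) $ and the Stirling ratio $ \Gamma(1+N+2\mu+2\omega_1)/\Gamma(N)\sim N^{1+2\mu+2\omega_1} $. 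The two sign factors cancel and one is left with the finite limit $ \sin\pi(2\mu+2\omega_1)/\pi $, which combines with the explicit $ 1/\sin\pi(2\mu+2\omega_1) $ in (\ref{AN_exp_1}) to give exactly the $ 1/\pi $ and the surviving ratio of Gamma functions in (\ref{Bulk_tau}).

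It then remains to place the result within the isomonodromic picture and to confirm that it is genuine rather than an artefact of the formal scaling. I would regard $ \mathcal{A}(x) $, through (\ref{Bulk_sigma}) and the Okamoto data (\ref{Bulk_param}), as a \PV $ \tau $-function and compare it with Jimbo's \PV expansion (\ref{PV_tExp_0}) of Theorem~\ref{PV_Exp}: matching the exponent of the non-analytic term fixes $ \sigma=2\mu+2\omega_1 $, while matching its coefficient identifies the \PV connection datum $ \hat{s} $, with (\ref{Bulk_param}) supplying $ \theta_0,\theta_1,\theta_{\infty} $. One then checks that these \PV data are the image of the \PVI monodromy data of Theorem~\ref{SSE} under the confluence in which the two regular singular points carrying exponents $ \theta_0,\theta_t\sim\pm N $ coalesce, as governed by Kitaev's rigorous transition from \PVI to \PV \cite{Ki_1994}; in particular the degenerate limit $ s_{0t}=\epsilon\pi\hat{s}/2 $, $ \epsilon\to0 $, of Theorem~\ref{SSE} must map to a well-defined \PV Stokes datum, which is the separatrix, or partially truncated, character announced in the abstract.

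The main obstacle is not the leading computation above, which matches (\ref{Bulk_tau}) exactly, but the justification of the term-by-term passage to the limit. To secure both the $ {\rm O}(x^2) $ analytic remainder and the $ (1+{\rm O}(x)) $ correction to the non-analytic term one needs every subleading coefficient of (\ref{AN_exp_1}) to carry exactly the power of $ N $ required to survive multiplication by the matching power of $ (1-t)\sim x/N $: the $ (1-t)^k $ coefficient must scale like $ N^k $ in the analytic sector, and the relative $ (1-t) $-corrections to the non-analytic term like $ N $. Since (\ref{AN_exp_1}) records these coefficients only to leading order, and since the boundary case $ \Re(2\mu+2\omega_1)=0 $ permitted here even falls outside the strict hypothesis $ \Re(2\mu+2\omega_1)>0 $ under which (\ref{AN_exp_1}) was derived, this uniform $ N $-scaling is exactly what the formal argument cannot supply by itself. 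It is here that the rigorous confluence theory of \cite{Ki_1994}, guaranteeing that the \PVI expansion (\ref{PVI_tExp_0}) passes to the \PV expansion (\ref{PV_tExp_0}) under the scaling, must be invoked to legitimise the interchange of limit and expansion.
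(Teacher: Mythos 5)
Your proposal is correct and follows essentially the same route as the paper: the paper's proof is a one-line appeal to the uniform validity in $N$ of the expansion (\ref{AN_exp_1}) under $t\mapsto e^{-x/N}$ together with ``the asymptotic formula for the ratio of gamma functions,'' which is precisely the substitution, reflection-formula, and Stirling-ratio computation you carry out explicitly, including the correct cancellation of the $(-1)^{N+1}$ factors and the limit $\Gamma(1+N+2\mu+2\omega_1)/\Gamma(N)\cdot N^{-(1+2\mu+2\omega_1)}\to 1$. Your closing paragraph honestly flags the one point the paper simply asserts rather than proves (uniformity in $N$ of the subleading coefficients, and the boundary case $\Re(2\mu+2\omega_1)=0$), which is consistent with the paper's own admission that the scaling limit is taken ``via a formal argument.''
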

\begin{proof}
The expansion (\ref{AN_exp_1}) is valid uniformly in $ N $ under the substitution
$ t \mapsto \exp(-x/N) $ and by taking the limit $ N \to \infty $ and employing
the asymptotic formula for the ratio of gamma functions we arrive at
(\ref{Bulk_tau}). 
\end{proof}

Comparison of this result with the general Theorem \ref{PV_Exp} of Jimbo leads to 
the following conclusions.
\begin{theorem}\label{Bulk_x=0Mdata}
The \PV monodromy parameters for the bulk scaled spectrum singularity ensemble
are
\begin{equation}
  \theta_{0} = \mu+\bar{\omega}, \quad
  \theta_{1} = -\mu-\omega, \quad
  \theta_{\infty} = 2\mu-2\omega_1 ,
  \label{Bulk_Mexp:A}
\end{equation}
with the invariant $ \sigma = 2\mu+2\omega_1 $. The monodromy coefficient is 
\begin{equation}
  s = -\frac{\sin\pi2\mu}{\sin\pi2\omega_1}
      -\frac{\sin\pi(2\mu+2\omega_1)}{\sin\pi2\omega_1\sin\pi(\mu+\omega)}
       \xi^*\frac{e^{-\pi i(\mu-\bar{\omega})}}{2i} .
\label{Bulk_s:A}
\end{equation}
\end{theorem}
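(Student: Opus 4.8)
The plan is to read off the \PV monodromy data by matching the two independent small-argument expansions now at our disposal: the bulk-scaled expansion (\ref{Bulk_tau}) of Theorem \ref{Bulk_scaling} and Jimbo's generic expansion (\ref{PV_tExp_0}) of Theorem \ref{PV_Exp}. Before comparing the two braces term by term, I would first record the dictionary between the $\mathcal{A}$-function and the $\tau$-function from which (\ref{PV_tExp_0}) is expanded: the definitions (\ref{PV_sigma}) and (\ref{Bulk_sigma}) differ by an explicit term linear in the independent variable together with an additive constant, so that $\mathcal{A}(x)$ equals Jimbo's $\tau$ times a prefactor of the form $const\cdot x^{\alpha}e^{\beta x}$. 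Expanding $e^{\beta x}=1+\beta x+\dots$ shows that this prefactor perturbs the integer-power coefficients of the brace but leaves untouched the non-integer power $x^{1+\sigma}$, which is therefore the cleanest invariant to match first.

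First I would equate the exponents of the single non-integer power on each side, $1+\sigma$ in (\ref{PV_tExp_0}) against $1+2\mu+2\omega_1$ in (\ref{Bulk_tau}), which immediately yields $\sigma=2\mu+2\omega_1$; this agrees with the \PVI invariant $\sigma_{0t}$ of Theorem \ref{SSE} as it must, since under the confluence $t\to0$ of \cite{Ki_1994} the invariant of the merging pair is inherited by \PV. The three exponents $\theta_0,\theta_1,\theta_\infty$ I would then fix by the same confluence rules applied to the \PVI data of Theorem \ref{SSE}: the two regular points that survive carry the exponents $\theta^{\VI}_1=-\mu-\omega$ and $\theta^{\VI}_\infty=\mu+\bar\omega$ over to $\theta_1$ and $\theta_0$, while the coalescing pair combines as $\theta_\infty=\theta^{\VI}_0+\theta^{\VI}_t=2\mu-2\omega_1$. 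Since $\theta_0$ and $\theta_1$ enter (\ref{PV_tExp_0}) only through $\theta_0^2$ and, up to the symmetry $\theta_1\to-\theta_1$, through $\theta_1^2$, their signs are a matter of convention, whereas $\theta_\infty$ occurs linearly and so is genuinely determined; I would confirm its value, sign included, by matching the linear coefficient $-\theta_\infty(\theta_1^2-\theta_0^2+\sigma^2)/4\sigma^2$ of (\ref{PV_tExp_0}) against $\mu(\bar\omega-\omega)/(2\mu+2\omega_1)$ in (\ref{Bulk_tau}), after subtracting the shift contribution $\beta$ identified in the previous paragraph.

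With the exponents in hand, the monodromy coefficient $s$ follows from the one remaining piece of data. I would equate the coefficient of $x^{1+\sigma}$ in (\ref{Bulk_tau}) with the coefficient $-\hat{s}\,[\theta_\infty-\sigma][\theta_0^2-(\theta_1-\sigma)^2]/8\sigma^2(1+\sigma)^2$ in (\ref{PV_tExp_0}), solve for $\hat{s}$, and then convert to $s$ through the gamma-function relation (\ref{PV_s_0t}). Substituting the now-known $\theta_0,\theta_1,\theta_\infty,\sigma$ reduces $[\theta_\infty-\sigma]$ and $[\theta_0^2-(\theta_1-\sigma)^2]$ to elementary factors, and collapses the product of gamma functions carried by both (\ref{Bulk_tau}) and (\ref{PV_s_0t}) by means of the reflection formula $\Gamma(z)\Gamma(1-z)=\pi/\sin\pi z$ together with the duplication formula.

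The hard part will be this final reduction: showing that the gamma quotient coming from (\ref{PV_s_0t}), multiplied by the gamma prefactor of the $x^{1+\sigma}$ term in (\ref{Bulk_tau}), telescopes exactly to the compact trigonometric expression (\ref{Bulk_s:A}), with the $\xi^{*}$-dependent piece and the $\sin\pi2\mu/\sin\pi2\omega_1$ piece emerging with the correct signs and phases. A secondary but essential bookkeeping obstacle is the consistent treatment of the $x^{\alpha}e^{\beta x}$ prefactor relating $\mathcal{A}$ to $\tau$: one must verify that $\alpha=(\sigma^2-\theta_\infty^2)/4=4\mu\omega_1$ and that $\beta$ is precisely the linear shift dictated by (\ref{Bulk_sigma}) and (\ref{PV_sigma}), since any slip there would corrupt the integer-power matches used to confirm $\theta_\infty$.
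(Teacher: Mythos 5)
Your route is correct in substance and, for the invariant and the monodromy coefficient, coincides with the paper's: the paper likewise compares (\ref{Bulk_tau}) with Jimbo's expansion (\ref{PV_tExp_0}) through an explicit prefactor $\mathcal{A}(x)=Ce^{\frac{1}{4}(\theta_{\infty}-\omega+\bar{\omega})x}x^{\alpha}\tau(x)$, reads off $\sigma^2=(2\mu+2\omega_1)^2$ (from the algebraic prefactor rather than from your non-integer exponent --- an immaterial difference), confirms the analytic coefficients, and extracts $\hat{s}$, hence $s$ via (\ref{PV_s_0t}), from the $x^{1+\sigma}$ term. Where you genuinely diverge is in fixing $\theta_0,\theta_1,\theta_{\infty}$: the paper stays entirely within \PV, identifying the quartic product of the alternative $\sigma$-form (\ref{JMO_PV}) with parameters (\ref{Bulk_param}) against (\ref{PV_SF}), which yields the admissible exponent sets modulo permutations, one choice then being validated by the expansion matching. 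You instead import the exponents by confluence from the \PVI data of Theorem \ref{SSE} ($\theta_{0{\V}}=\theta_{\infty{\VI}}$, $\theta_{1{\V}}=\theta_{1{\VI}}$, $\theta_{\infty{\V}}=\theta_{0{\VI}}+\theta_{t{\VI}}$). That rule is exactly the content of Kitaev's limit transition (Theorem \ref{LimitII}), which the paper deploys only afterwards, in Section 6, precisely so that the corollary there serves as an \emph{independent} consistency check on Theorem \ref{Bulk_x=0Mdata}; invoking it at this stage presupposes the applicability of the \PVI-to-\PV transition to this ensemble instead of deriving the \PV data from the $x\to 0$ expansion alone, so the paper's ordering buys logical independence of the two computations, while yours buys a quicker (but circular-looking, in the paper's architecture) identification of the exponents.

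One concrete omission: you speak of ``the single non-integer power'' in (\ref{PV_tExp_0}), but Jimbo's expansion generically carries \emph{two}, $x^{1+\sigma}$ and $x^{1-\sigma}$, whereas (\ref{Bulk_tau}) exhibits only one. Your matching is consistent only because, with the signs of (\ref{Bulk_Mexp:A}), one has $\theta_0-\theta_1-\sigma=0$, so that $[\theta_0^2-(\theta_1+\sigma)^2]=(\theta_0-\theta_1-\sigma)(\theta_0+\theta_1+\sigma)$ and the coefficient of $x^{1-\sigma}$ vanishes identically --- a check the paper states explicitly, and without which solving for $\hat{s}$ from the $x^{1+\sigma}$ term alone would leave an unaccounted $\hat{s}^{-1}x^{1-\sigma}$ contribution. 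For the same reason your claim that the signs of $\theta_0,\theta_1$ are ``a matter of convention'' is not quite right once $\sigma=2\mu+2\omega_1$ is pinned by $0\leq\Re(2\mu+2\omega_1)<1$: flipping $\theta_1\mapsto-\theta_1$ gives instead $\theta_0+\theta_1-\sigma=0$, which kills the $x^{1+\sigma}$ coefficient and contradicts (\ref{Bulk_tau}); moreover $\theta_0$ enters (\ref{PV_s_0t}) linearly, so its sign feeds into the value of $s$ in (\ref{Bulk_s:A}) even though $\hat{s}$ sees only $\theta_0^2$. Finally, $\theta_0-\theta_1-\sigma=0\in 2\Z$ means these data violate the genericity hypothesis $\theta_1\pm\theta_0\pm\sigma\notin 2\Z$ of Theorem \ref{PV_Exp}, so an appeal to the limiting relaxation conjectured in Section 2 (as in the \PVI case of Theorem \ref{SSE}) should accompany the matching; here, unlike there, no singular rescaling of $s$ is needed since the offending gamma factors in (\ref{PV_s_0t}) remain finite. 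With these points supplied, your plan goes through and reproduces (\ref{Bulk_s:A}).
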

\begin{proof}
The identification of (\ref{JMO_PV}) with the parameter set (\ref{Bulk_param})
with (\ref{PV_SF}) leads to the following solution sets for the \PV formal
monodromy exponents
\begin{multline}
    \{  \frac{1}{2}\theta_{0}+\frac{1}{4}\theta_{\infty},
       -\frac{1}{2}\theta_{0}+\frac{1}{4}\theta_{\infty},
        \frac{1}{2}\theta_{1}-\frac{1}{4}\theta_{\infty},
       -\frac{1}{2}\theta_{1}-\frac{1}{4}\theta_{\infty} \} \\
 =  \{  \mu-\frac{1}{2}i\omega_2,-\mu-\frac{1}{2}i\omega_2,
        \omega-\frac{1}{2}i\omega_2, -\bar{\omega}-\frac{1}{2}i\omega_2 \},
\end{multline} 
modulo permutations. One choice, consistent with subsequent calculations, is given
in (\ref{Bulk_Mexp:A}). Using this choice and comparing the two asymptotic expansions 
(\ref{Bulk_tau}) and (\ref{PV_tExp_0}) through their relationship
\begin{equation}
  \mathcal{A}(x) = Ce^{\frac{1}{4}(\theta_{\infty}-\omega+\bar{\omega})x}
          x^{\frac{1}{4}[(\theta_0+\theta_{\infty})^2-\theta_{1}^2]
                         -\frac{1}{2}(\theta_0+\frac{1}{2}\theta_{\infty})^2
                         -2\mu\omega_1+\frac{1}{8}(\omega-\bar{\omega})^2}\tau(x) .
\end{equation}
we see that $ \sigma^2 = (2\mu+2\omega_1)^2 $ from consideration of the algebraic
prefactor. The coefficients of the analytic terms then agree upon using this value
for the exponent. Finally the coefficient of the non-analytic 
$ x^{1-\sigma} $ vanishes because $ \theta_0-\theta_1-\sigma=0 $ and
(\ref{Bulk_s:A}) follows from a matching of the remaining term.
\end{proof}

\section{The Limit transition \PVI to \PV}
\setcounter{equation}{0}
The limit transition from \PVI to \PV is from one monodromy preserving 
system to another and it is known \cite{Bo_1994} that there exists no continuous 
transition between such systems which is itself monodromy preserving. 
However there are discrete transitions which are and one example is a sequence of 
even parity Schlesinger transformations of the \PVI system, i.e. 
$ \theta_{\nu} \mapsto \theta_{\nu}+2n, n\in\Z^+ $. 
In a sense the \PV system arises then as a fixed point of this map although the
deformation variable also must scale in an appropriate way.
This approach was fundamental
to the study of Kitaev \cite{Ki_1994} who considered an example which is 
relevant to the present work. By way of comparison with our Theorem \ref{Bulk_scaling} 
let us now consider the formal scaling undertaken in limit II
of \PVI as given in \cite{Ki_1994}. In this limit the \PVI formal exponents of
monodromy are related to the \PV exponents by
\begin{align}
  \theta_{t{\VI}} & = \theta_{6}-2n =: -\frac{1}{\epsilon},
  \label{iReg_coal_theta:a}\\
  \theta_{0{\VI}} & = \theta_{\infty {\V}}-\theta_{6}+2n 
                = \theta_{\infty {\V}}+\frac{1}{\epsilon},
  \label{iReg_coal_theta:b}\\
  \theta_{1{\VI}} & = \theta_{1{\V}},
  \label{iReg_coal_theta:c}\\
  \theta_{\infty {\VI}} & = \theta_{0{\V}},
  \label{iReg_coal_theta:d}
\end{align}
and the limit $ n \to \infty $ or $ \epsilon \to 0 $ is taken. The quantity 
$ \theta_{6} $ is an additional constant which will fixed in our application. 
The two regular singularities $ \nu = 0,t $ coalesce through
$ t_{\VI} = \epsilon t_{\V} = O(\epsilon) $ and the transition from the linear \PVI
system to the \PV system takes place according to the scheme displayed in 
Fig. \ref{PVI-V.fig}.

\begin{figure}
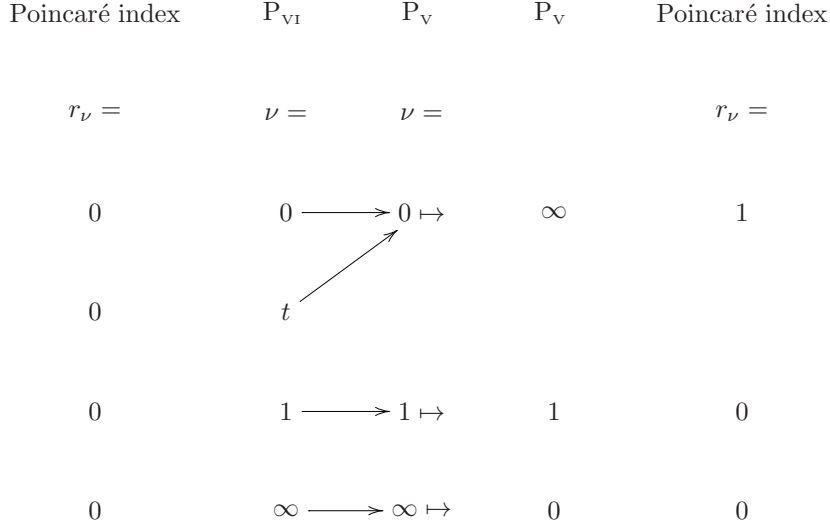

\begin{center}
\begin{xymatrix}{
  \text{Poincar\'e index} & {{\rm P}_{\VI}\;} & {{\rm P}_{\V}\;} & {{\rm P}_{\V}\;} & \text{Poincar\'e index} \\
  {r_{\nu}=} & {\nu=} & {\nu=} & {} & {r_{\nu}=} \\
  0 & 0 \ar[r] & {0\mapsto} & {\infty} & 1 \\
  0 & t \ar[ur] &&  &  \\
  0 & 1 \ar[r] & {1\mapsto} & 1 & 0 \\
  0 & {\infty} \ar[r] & {\infty\mapsto} & 0 & 0 
}
\end{xymatrix}
\vskip0.0cm
\caption{Coalescence of the two regular singularities $ 0,t $ of the \PVI system into 
the irregular singularity at $ \infty $ of a \PV system according to the limit 
transition II of \cite{Ki_1994}.}\label{PVI-V.fig}
\end{center}
\end{figure}

At the level of the \PVI isomonodromic system the following scaling takes place as 
$ n \to\infty $, $ \epsilon \to 0 $
\begin{align}
  \lambda_{\VI} & = \frac{1}{\lambda_{\V}},
  \\
  t_{\VI} & = \epsilon t_{\V},
  \\
  \lim_{\epsilon \to 0} R^{-1}_{t{\VI}}\Psi_{\VI}(t_{\VI},\lambda_{\VI}) & = \Psi_{\V}(t_{\V},\lambda_{\V}),
  \\
  R^{-1}_{t{\VI}}\frac{1}{2}\theta_{\infty {\VI}}\sigma_3 R_{t{\VI}} & = A_{0 {\V}}+O(\epsilon),
  \\
  R^{-1}_{t{\VI}} A_{1 {\VI}} R_{t{\VI}} & = A_{1 {\V}}+O(\epsilon),
  \\
  -R^{-1}_{t{\VI}}\frac{d}{dt_{\V}}R_{t{\VI}} & = 
   \frac{1}{t_{\V}}\left( \frac{1}{2}\theta_{\infty {\V}}\sigma_3+A_{0 {\V}}+A_{1 {\V}} \right)
    + O(\epsilon) .
\label{iReg_coal_IM}
\end{align}

Consequently 
Theorem 2 of Kitaev \cite{Ki_1994} allows us to compute the \PV monodromy data under 
the limit transition II described above.
\begin{theorem}[\cite{Ki_1994}]\label{LimitII}
Let $ \mu_{\VI} \in {\mathcal M}_{\VI}(\theta_{\infty{\V}}-\theta_{6},\theta_{1{\VI}},\theta_{6},\theta_{\infty{\VI}}) $
and suppose the following conditions hold:
\begin{enumerate}
\item
  (\ref{iReg_coal_theta:a}),(\ref{iReg_coal_theta:b}) and 
  $ \theta_{\nu{\VI}} \in \mathbb{C}\backslash\mathbb{Z} $,
\item
  $ (M_{t{\VI}}-e^{-\pi i \theta_{6}})(M_{0{\VI}}-e^{-\pi i (\theta_{\infty{\V}}-\theta_{6})}) \neq  0$,
\item
  $ \alpha, \beta, l \in \mathbb{C}\backslash\mathbb{Z} $, where
\begin{gather}
  T = {\rm Tr}(M_{t{\VI}}-e^{\pi i \theta_{6}})(M_{0{\VI}}-e^{-\pi i (\theta_{\infty{\V}}-\theta_{6})}),
  \\
  l=\frac{1}{\pi i}\log\left(
    \cos\pi\theta_{\infty{\V}}+\frac{1}{2}T\pm\sqrt{(\cos\pi\theta_{\infty{\V}}+\frac{1}{2}T)^2-1}
                       \right),
  \\
  \alpha = -\frac{1}{2}(\theta_{\infty{\V}}-l), \quad
  \beta = -\frac{1}{2}(\theta_{\infty{\V}}+l) ,
\end{gather}
\item
  the inverse monodromy problem for (\ref{PVI_linear}) is solvable 
  for all pairs 
  $ (\mu_{\VI} ,t_{\VI}) $ 
  such that $ t_{\VI}=\epsilon t_{\V}>0 $,
\item
  it is possible to construct the sequence $ A^{2n}_{\nu {\VI}}(\epsilon t_{\V}) $,
  $ n \in\Z^+ $.
\end{enumerate}
Then the data $ \mu_{\V} \in {\mathcal M}_{\V}(\theta_{0{\V}},\theta_{1{\V}},\theta_{\infty{\V}}) $
are given by the formulae
\begin{gather}
   \theta_{0{\V}}=\theta_{\infty{\VI}}, \qquad
   \theta_{1{\V}}=\theta_{1{\VI}} ,
   \\
   \hat{S}_0 = \begin{pmatrix}
         1 & 0 \\ \frac{2\pi i}{\Gamma(1-\alpha)\Gamma(1-\beta)} & 1 
         \end{pmatrix} ,
   \quad
   \hat{S}_1 = \begin{pmatrix}
         1 & -\frac{2\pi i}{\Gamma(\alpha)\Gamma(\beta)}e^{\pi i\theta_{\infty{\V}}} \\ 0 & 1 
         \end{pmatrix} ,
   \\
   \hat{M}_{0{\V}} = KM_{\infty{\VI}}K^{-1}, \qquad
   \hat{M}_{1{\V}} = KM_{1{\VI}}K^{-1} ,
\label{LimitII_M}
\end{gather}
where $ K $ is the unique (up to a sign) solution of the system
\begin{equation}
   KM_{t{\VI}}K^{-1} = \hat{S}_0e^{\pi i\theta_{6}\sigma_3}, \qquad
   KM_{0{\VI}}K^{-1} = e^{-\pi i\theta_{6}\sigma_3}\hat{S}_1e^{\pi i\theta_{\infty{\V}}\sigma_3} .
\label{LimitII_K}
\end{equation}
\end{theorem}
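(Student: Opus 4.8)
The plan is to derive the \PV data as the simultaneous limit of the \PVI monodromy representation under the confluence $t_{\VI}=\epsilon t_{\V}\to0$ of the regular singularities $\lambda=0,t$ and the even Schlesinger sequence $\theta_{t{\VI}}\mapsto\theta_{t{\VI}}-2n$, $\theta_{0{\VI}}\mapsto\theta_{0{\VI}}+2n$ fixed by (\ref{iReg_coal_theta:a}),(\ref{iReg_coal_theta:b}). The organising remark is that an even shift leaves every local monodromy matrix invariant, since $e^{\pi i(\theta_\nu+2n)\sigma_3}=e^{\pi i\theta_\nu\sigma_3}$; only the connection matrices and the solution $\Psi_{\VI}$ move along the sequence. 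Hence the fixed matrices $M_{t{\VI}},M_{0{\VI}},M_{1{\VI}},M_{\infty{\VI}}$ of $\mu_{\VI}$ are exactly the invariants from which the limiting \PV data must be assembled, and the problem reduces to bookkeeping in $SL(2,\C)$ together with one genuine piece of analysis, namely the Stokes structure created at the merged point.

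First I would perform the inner analysis. After the inversion $\lambda_{\VI}=1/\lambda_{\V}$ the spectator $\infty_{\VI}$ goes to $0_{\V}$ and $1_{\VI}$ stays at $1_{\V}$, while $0_{\VI},t_{\VI}$ are driven to $\lambda_{\V}=\infty$ at separation $O(\epsilon)$ with exponents $\theta_{t{\VI}}\sim-1/\epsilon$ and $\theta_{0{\VI}}\sim\theta_{\infty{\V}}+1/\epsilon$ diverging, exactly as recorded in Fig.~\ref{PVI-V.fig}. This is the classical mechanism by which a pair of regular singular points coalesces into a rank-one irregular point: in the scaling region (\ref{PVI_linear}) degenerates to a confluent hypergeometric (Whittaker) model whose exponential factor reproduces the $\exp(\tfrac12 t_{\V}\lambda_{\V}\sigma_3)$ of (\ref{PV_irregE}). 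Writing the eigenvalues of $M_{t{\VI}},M_{0{\VI}}$ as $e^{\pm\pi i\theta_6}$ and $e^{\pm\pi i(\theta_{\infty{\V}}-\theta_6)}$ respectively, a short computation gives $T={\rm Tr}(M_{t{\VI}}M_{0{\VI}})-2\cos\pi\theta_{\infty{\V}}$, so the defining relation $2\cos\pi l=2\cos\pi\theta_{\infty{\V}}+T$ is precisely $2\cos\pi l={\rm Tr}(M_{t{\VI}}M_{0{\VI}})$; thus $e^{\pm\pi il}$ are the eigenvalues of the merged monodromy, and $\alpha,\beta$ are the parameters of the model equation fixed by $\alpha+\beta=-\theta_{\infty{\V}}$, $\alpha-\beta=l$, which is the content of their stated expressions.

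Next I would read off the Stokes data from the connection coefficients of the confluent solutions, which are ratios of Gamma functions; this yields $\hat s_0=2\pi i/[\Gamma(1-\alpha)\Gamma(1-\beta)]$ and $\hat s_1=-2\pi ie^{\pi i\theta_{\infty{\V}}}/[\Gamma(\alpha)\Gamma(\beta)]$, hence the matrices $\hat S_0,\hat S_1$ of (\ref{LimitII_M}). The intertwiner $K$ is then defined as the connection matrix between the limiting \PVI fundamental solution and the \PV canonical solution at the irregular point; its defining property is that the two small loops encircling $0_{\VI}$ and $t_{\VI}$ decompose, across the emergent Stokes rays, into a Stokes factor and a formal-monodromy factor, which is exactly the system (\ref{LimitII_K}). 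Multiplying the two relations of (\ref{LimitII_K}) gives $K(M_{t{\VI}}M_{0{\VI}})K^{-1}=\hat S_0\hat S_1e^{\pi i\theta_{\infty{\V}}\sigma_3}=\hat M_{\infty{\V}}$, so the loop around both merging points becomes the loop around the new irregular point; taking traces and using (\ref{PV_manifold}) recovers $2\cos\pi l=2\cos\pi\sigma$, the consistency of $l$ with the \PV invariant. The non-degeneracy hypotheses (2),(3) guarantee that this system is solvable for $K$, uniquely up to a sign. The spectators pass through by the same global conjugation, giving $\theta_{0{\V}}=\theta_{\infty{\VI}}$, $\theta_{1{\V}}=\theta_{1{\VI}}$ with $\hat M_{0{\V}}=KM_{\infty{\VI}}K^{-1}$, $\hat M_{1{\V}}=KM_{1{\VI}}K^{-1}$, and the \PV cyclic relation (\ref{PV_cyclicH}) follows from conjugating (\ref{PVI_cyclic}) by $K$ once $K(M_{t{\VI}}M_{0{\VI}})K^{-1}=\hat M_{\infty{\V}}$ is in hand.

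The main obstacle is the uniform justification of the inner/outer matching as $\epsilon\to0$: one must show that the Stokes and monodromy data computed from the model equation are genuinely the limits of the \PVI data carried along the Schlesinger sequence, interchanging the limit $\epsilon\to0$ with the analytic continuations that define the monodromy while the exponents $\theta_{t{\VI}},\theta_{0{\VI}}$ diverge. This is exactly where hypotheses (4) and (5)—solvability of the inverse monodromy problem for every admissible $(\mu_{\VI},t_{\VI})$ with $t_{\VI}=\epsilon t_{\V}>0$, and constructibility of the sequence $A^{2n}_{\nu{\VI}}(\epsilon t_{\V})$—become indispensable, since they certify that the deformed linear systems exist along the entire sequence and that the rescaled solutions converge to $\Psi_{\V}$ in the sense of (\ref{iReg_coal_IM}).
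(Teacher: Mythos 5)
You should first note what you are being compared against: the paper itself gives no proof of this theorem --- it is imported verbatim as Theorem 2 of Kitaev \cite{Ki_1994}, with the surrounding text recording only the coalescence scheme (\ref{iReg_coal_theta:a})--(\ref{iReg_coal_IM}) and Fig.~\ref{PVI-V.fig}. Your sketch reconstructs exactly that scheme, and its algebraic content is correct. The observation that even Schlesinger shifts fix every $M_{\nu}$ because $e^{\pi i(\theta_{\nu}+2n)\sigma_3}=e^{\pi i\theta_{\nu}\sigma_3}$ is the right starting point; your trace identity $T={\rm Tr}(M_{t{\VI}}M_{0{\VI}})-2\cos\pi\theta_{\infty{\V}}$ checks out against the eigenvalues $e^{\pm\pi i\theta_{6}}$ and $e^{\pm\pi i(\theta_{\infty{\V}}-\theta_{6})}$ of $M_{t{\VI}}$, $M_{0{\VI}}$, so that $2\cos\pi l={\rm Tr}(M_{t{\VI}}M_{0{\VI}})$ and $\alpha+\beta=-\theta_{\infty{\V}}$, $\alpha-\beta=l$ as claimed; and multiplying the two relations of (\ref{LimitII_K}) does give $KM_{t{\VI}}M_{0{\VI}}K^{-1}=\hat{S}_0\hat{S}_1e^{\pi i\theta_{\infty{\V}}\sigma_3}=\hat{M}_{\infty{\V}}$, whence conjugating (\ref{PVI_cyclic}) by $K$ yields the reversed cyclic relation (\ref{PV_cyclicH}), consistent with the paper's remark on the ordering; the trace check against (\ref{PV_manifold}) recovering $\cos\pi l=\cos\pi\sigma$ is also right.

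The one substantive overreach is your closing claim that hypotheses (4) and (5) ``certify \ldots that the rescaled solutions converge to $\Psi_{\V}$ in the sense of (\ref{iReg_coal_IM})''. Those hypotheses assert only solvability of the inverse monodromy problem and existence of the sequence $A^{2n}_{\nu{\VI}}(\epsilon t_{\V})$; they do not by themselves deliver convergence, nor the identification of the limiting Stokes multipliers with the Gamma-function connection coefficients of the confluent hypergeometric model. Proving that the monodromy and Stokes data of the limit equal the limits of the \PVI data --- an interchange of $\epsilon\to 0$ with the analytic continuations defining the monodromy, carried out while $\theta_{t{\VI}},\theta_{0{\VI}}$ diverge, via uniform asymptotics of $\Psi_{\VI}$ in overlapping $\lambda$-domains --- is precisely the analytic core of Kitaev's proof, and your sketch names this obstacle honestly but does not execute it. So as a reconstruction of the cited argument your proposal is faithful and structurally correct (and as complete as the paper's own treatment, which is by citation), but as a self-contained proof it is open at exactly this matching step.
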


Applying the above Theorem to the spectrum singularity ensemble we have the following result.
\begin{corollary}
In the limit transition of Theorem (\ref{LimitII}) we find the Painlev\'e V parameters 
of the spectrum singularity ensemble are 
\begin{equation}
  \theta_{0{\V}}=\mu+\bar{\omega}, \quad
  \theta_{1{\V}}=-\mu-\omega, \quad 
  \theta_{\infty{\V}}=2\mu-2\omega_1, \quad
  \theta_{6}=-2\omega_1 .
\label{iReg_Mexp}
\end{equation}
The Stokes multipliers are
\begin{equation}
   \hat{s}_0 = \frac{2\pi i}{\Gamma(1-2\omega_1)\Gamma(1+2\mu)}, \quad
   \hat{s}_1 =-\frac{2\pi ie^{\pi i(2\mu-2\omega_1)}}{\Gamma(2\omega_1)\Gamma(-2\mu)} .
\label{iReg_Stokes}
\end{equation}

\vfill\eject
\begin{sideways}
\begin{minipage}[c][12cm][c]{20cm}{
The monodromy matrices are 
\begin{equation}
  \hat{M}_{0{\V}} =
  \begin{pmatrix}
  e^{\pi i(\mu+\bar{\omega})}(1-\xi^*)-e^{\pi i(\mu-\bar{\omega})}2i\sin\pi2\mu
  & 
  -\frac{\displaystyle\Gamma(1+2\mu)}{\displaystyle\Gamma(2\omega_1)}
   e^{i\pi(2\mu+2\omega_1)}\left[ e^{-\pi i(\mu+\bar{\omega})}2i\sin\pi2\mu
                                 +e^{\pi i(-\mu+\bar{\omega})}\xi^* \right]
  \\
  \frac{\displaystyle\Gamma(2\omega_1)}{\displaystyle\Gamma(1+2\mu)}
  e^{\pi i(2\mu-2\omega_1)}
  \left[ 2i\sin\pi(\mu-\bar{\omega})+e^{\pi i(-\mu+\bar{\omega})}\xi^* \right]
  & 
  e^{\pi i(3\mu-\bar{\omega})}+e^{\pi i(\mu+\bar{\omega})}\xi^*
  \end{pmatrix} ,
\label{iReg_M0}
\end{equation}
\begin{equation}
  \hat{M}_{1{\V}} =
  \begin{pmatrix}
  e^{\pi i(\mu+\omega)}+e^{-\pi i(\mu+\omega)}\xi^*
  &
  \frac{\displaystyle\Gamma(1+2\mu)}{\displaystyle\Gamma(2\omega_1)}
  e^{\pi i(-\mu+\bar{\omega})}\xi^*
   \\ 
  -\frac{\displaystyle\Gamma(2\omega_1)}{\displaystyle\Gamma(1+2\mu)}
  e^{-\pi i2\omega_1}\left[ 2i\sin\pi(\mu+\omega)+e^{-\pi i(\mu+\omega)}\xi^* \right]
  & 
  e^{-\pi i(\mu+\omega)}(1-\xi^*)
  \end{pmatrix} ,
\label{iReg_M1}
\end{equation}
\begin{equation}
  \hat{M}_{\infty{\V}} =
  \begin{pmatrix}
  e^{\pi i(2\mu-2\omega_1)}  
  &
  -\frac{\displaystyle 2\pi i}{\displaystyle\Gamma(-2\mu)\Gamma(2\omega_1)}  
   \\ 
  \frac{\displaystyle 2\pi i}{\displaystyle\Gamma(1+2\mu)\Gamma(1-2\omega_1)}
  e^{\pi i(2\mu-2\omega_1)}
  & 
  2\cos\pi(2\mu+2\omega_1)-e^{\pi i(2\mu-2\omega_1)}
  \end{pmatrix} .
\label{iReg_Minfty}
\end{equation}
}
\end{minipage} 
\end{sideways}
\vfill\eject

\end{corollary}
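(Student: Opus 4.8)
The plan is to feed the \PVI monodromy data of the spectrum singularity ensemble recorded in Theorem \ref{SSE} into the limit-transition machinery of Theorem \ref{LimitII}, reading off each \PV datum from the corresponding Kitaev formula. First I would match the \PVI exponents (\ref{SSE_Mexp:A}) against the coalescence scheme (\ref{iReg_coal_theta:a})--(\ref{iReg_coal_theta:d}). Since $\theta_{t\VI}=-N-2\omega_1$ must equal $\theta_6-2n$, the natural choice is $n=N/2$ together with $\theta_6=-2\omega_1$; then $\theta_{0\VI}=N+2\mu=\theta_{\infty\V}-\theta_6+2n$ forces $\theta_{\infty\V}=2\mu-2\omega_1$, while the passive exponents give $\theta_{0\V}=\theta_{\infty\VI}=\mu+\bar{\omega}$ and $\theta_{1\V}=\theta_{1\VI}=-\mu-\omega$. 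This reproduces (\ref{iReg_Mexp}) and, crucially, identifies the Kitaev limit $n\to\infty$ with the bulk limit $N\to\infty$.

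The next step is to evaluate the trace invariant $T$ of Theorem \ref{LimitII} from the explicit upper-triangular matrices (\ref{SSE_MA:a})--(\ref{SSE_MA:f}). I expect $\tfrac12 T=\cos\pi\sigma-\cos\pi\theta_{\infty\V}$ with $\sigma=2\mu+2\omega_1$, so that the argument of the logarithm defining $l$ collapses to $\cos\pi\sigma\pm i\sin\pi\sigma=e^{\pm\pi i\sigma}$ and hence $l=\sigma=2\mu+2\omega_1$; this is the expected statement that the invariant $\sigma$ survives the coalescence of the two regular singularities. With $\theta_{\infty\V}=2\mu-2\omega_1$ the relations $\alpha=-\tfrac12(\theta_{\infty\V}-l)$, $\beta=-\tfrac12(\theta_{\infty\V}+l)$ then give $\alpha=2\omega_1$, $\beta=-2\mu$, the opposite branch merely interchanging $\alpha$ and $\beta$ and leaving the symmetric Stokes expressions unchanged. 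Substituting into the Stokes matrices of Theorem \ref{LimitII} yields (\ref{iReg_Stokes}) directly, and I would cross-check their product $\hat{s}_0\hat{s}_1$ against the \PV manifold constraint (\ref{PV_manifold}) using $\Gamma(z)\Gamma(1-z)=\pi/\sin\pi z$, as well as against the coefficient $s$ obtained by the independent $\tau$-expansion matching of Theorem \ref{Bulk_x=0Mdata} via the convention map (\ref{PV_StokesM}).

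To obtain the regular monodromy matrices I would first assemble $M_{\infty\VI}$ from the cyclic identity (\ref{PVI_cyclic}) as $M_0^{-1}M_t^{-1}M_1^{-1}$, which remains upper triangular with diagonal $e^{\pm\pi i\theta_{\infty\VI}}$ precisely because of the classical constraint $\theta_0-\theta_t+\theta_1-\theta_{\infty}\in2\Z$. Then I would solve the linear system (\ref{LimitII_K}) for the conjugator $K$; its two matrix equations are compatible exactly because $l=\sigma$ was verified above, and $K$ is fixed up to sign. Conjugating, $\hat{M}_{0\V}=KM_{\infty\VI}K^{-1}$ and $\hat{M}_{1\V}=KM_{1\VI}K^{-1}$ should produce (\ref{iReg_M0}) and (\ref{iReg_M1}), after which $\hat{M}_{\infty\V}=\hat{S}_0\hat{S}_1e^{\pi i\theta_{\infty\V}\sigma_3}$ (equivalently the cyclic relation (\ref{PV_cyclicH})) gives (\ref{iReg_Minfty}). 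Here I would note that the arbitrary \PVI gauge $r$ is absorbed into $K$ and cancels under conjugation, whereas the boundary datum $\xi^*$—entering the off-diagonal elements through $\hat{s}$ by (\ref{SSE_st1:A})—survives into the \PV matrices; a final consistency check is that $\mathrm{Tr}\,\hat{M}_{\infty\V}=2\cos\pi\sigma$ and that the parity factors $(-1)^N$ in $m_0,m_t$ drop out, leaving the $N$-independent answer the bulk limit demands.

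The main obstacle is not this algebra but the verification of the hypotheses (1)--(5) of Theorem \ref{LimitII}: the ensemble data is highly non-generic—the matrices are triangular and the associated coefficient is of truncated type with $s_{0t}=\epsilon\pi\hat{s}/2\to0$—so the non-resonance and solvability conditions must be justified through the limiting procedure conjectured in Section 2, where the left-hand sides of (\ref{PVI_conditions:c}) tend to $2\Z$. Concretely I would confirm that $\alpha,\beta,l\notin\Z$ hold for generic $\mu,\omega_1$, establish the non-degeneracy condition (2) on $(M_{t\VI}-e^{-\pi i\theta_6})(M_{0\VI}-e^{-\pi i(\theta_{\infty\V}-\theta_6)})$ directly from the triangular form, and appeal to the solvability of the inverse monodromy problem proved in \cite{Ki_1994}; the residual subtlety is tracking the parity identification $n=N/2$ and arguing that it does not affect the $n\to\infty$ limit.
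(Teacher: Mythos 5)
Your proposal is correct and follows essentially the same route as the paper: substitute the \PVI data of Theorem \ref{SSE} into Kitaev's Theorem \ref{LimitII}, read off the exponents from the coalescence scheme, compute $\alpha,\beta$ from the triangular monodromy matrices (your explicit evaluation $\tfrac12 T=\cos\pi\sigma-\cos\pi\theta_{\infty\V}$, $l=\pm\sigma$ is exactly the computation the paper summarises, with your branch $l=\sigma$ merely interchanging $\alpha\leftrightarrow\beta$, which the symmetric Stokes formulae do not see), then solve (\ref{LimitII_K}) for $K$ and conjugate to obtain the monodromy matrices, with $\hat{M}_{\infty\V}$ from the cyclic relation. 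Your added consistency checks (the manifold constraint (\ref{PV_manifold}), the trace $2\cos\pi\sigma$, cancellation of $r$ and $(-1)^N$, and the parity identification $n=N/2$) go beyond what the paper records but are sound and in the paper's spirit.
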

\begin{proof}
The set of \PVI parameters (\ref{SSE_Mexp:A}) in our application imply that the 
limiting \PV parameters are
$ \theta_{6} = -2\omega_1, \theta_{0{\V}}=\mu+\bar{\omega}, \theta_{1{\V}}=-\mu-\omega, 
  \theta_{\infty{\V}}=2\mu-2\omega_1 $, which are consistent with those of 
(\ref{Bulk_Mexp:A}). Using the structure of the \PVI monodromy matrices (\ref{SSE_MA:a})
and (\ref{SSE_MA:b}) we compute that $ \alpha = -\theta_{\infty{\V}}+\theta_{6} $
and $ \beta = -\theta_{6} $, which fixes the Stokes multipliers as given by
(\ref{iReg_Stokes}). We solve the system (\ref{LimitII_K}) which yields the solution
\begin{equation}
  K = K_{1,1}
  \begin{pmatrix}
  1  
  &
  -i\frac{\displaystyle m_{t{\VI}}}{\displaystyle 2\sin\pi\theta_{6}}  
   \\[10pt] 
  \frac{\displaystyle 2i}{\displaystyle \hat{s}_1}
  e^{\pi i(\theta_{\infty{\V}}+\theta_{6})}
  \sin\pi(-\theta_{\infty{\V}}+\theta_{6})
  & 
  \frac{\displaystyle e^{\pi i(\theta_{\infty{\V}}+\theta_{6})}m_{t{\VI}}}
       {\displaystyle \hat{s}_1}
  \end{pmatrix} ,
\end{equation}
for an arbitrary non-zero $ K_{1,1} $. Finally we employ these findings in 
(\ref{LimitII_M}) and after lengthy calculations arrive at explicit forms for the 
monodromy matrices (\ref{iReg_M0},\ref{iReg_M1},\ref{iReg_Minfty}).
\end{proof}

\vfill\eject
\begin{sideways}
\begin{minipage}[c][12cm][c]{20cm}{
However the explicit parameterisation of the alternative \PV monodromy matrices 
(\ref{PV_M0},\ref{PV_M1}) 
allows us to compute these directly from the data deduced from the expansion of the 
$\tau$-function about $ x=0 $, as given by Theorems \ref{Bulk_scaling} and
\ref{Bulk_x=0Mdata}.
\begin{corollary}\cite{Ji_1982,AK_2000}
In terms of the other set of monodromy data 
$ \{M_{0\rm V},M_{1\rm V},M_{\infty\rm V}\} $ we have
\begin{equation}
  M_{0\rm V} = 
  \begin{pmatrix}
    e^{\pi i(-3\mu+\bar{\omega})}(1-\xi^*)
    &
    -\frac{\displaystyle\Gamma(1+2\mu)}{\displaystyle\Gamma(2\omega_1)}
     e^{\pi i(-\mu-\omega)}
    \left[ 2i\sin\pi2\mu+e^{-\pi i2\mu}\xi^* \right]
    \cr
    \frac{\displaystyle\Gamma(2\omega_1)}{\displaystyle\Gamma(1+2\mu)}
     e^{\pi i(-2\mu+2\omega_1)}
    \left[ 2i\sin\pi(\mu-\bar{\omega})+e^{\pi i(-\mu+\bar{\omega})}\xi^* \right]
    &
    2\cos\pi(\mu+\bar{\omega})-e^{\pi i(-3\mu+\bar{\omega})}(1-\xi^*)
  \end{pmatrix} ,
\label{PV_Nm0}
\end{equation}
and
\begin{equation}
  M_{1\rm V} = 
  \begin{pmatrix}
    e^{\pi i(\mu+\omega)}+e^{\pi i(2\omega_1-\mu+\bar{\omega})}\xi^*
    &
    \frac{\displaystyle\Gamma(1+2\mu)}{\displaystyle\Gamma(2\omega_1)}
     e^{\pi i(-\mu+\bar{\omega)}}\xi^*
    \cr
    -\frac{\displaystyle\Gamma(2\omega_1)}{\displaystyle\Gamma(1+2\mu)}
     e^{\pi i2\omega_1}
    \left[ 2i\sin\pi(\mu+\omega)+e^{\pi i(2\omega_1-\mu+\bar{\omega})}\xi^* \right]
    &
    e^{\pi i(-\mu-\omega)}-e^{\pi i(2\omega_1-\mu+\bar{\omega})}\xi^*
  \end{pmatrix} ,
\label{PV_Nm1}
\end{equation} 
and
\begin{equation}
  M_{\infty\rm V} = 
  \begin{pmatrix}
    2\cos\pi(2\mu+2\omega_1)-e^{-\pi i(2\mu-2\omega_1)}
    &
    -\frac{\displaystyle 2\pi i}{\displaystyle\Gamma(-2\mu)\Gamma(2\omega_1)}
    \cr
     \frac{\displaystyle 2\pi i}{\displaystyle\Gamma(1+2\mu)\Gamma(1-2\omega_1)}
     e^{-\pi i(2\mu-2\omega_1)}
    &
    e^{-\pi i(2\mu-2\omega_1)}
  \end{pmatrix} .
\label{PV_NmInf}
\end{equation}
\end{corollary}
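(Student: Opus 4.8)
The plan is to extract every ingredient from data already in hand. First I would record the exponents $\theta_0=\mu+\bar\omega$, $\theta_1=-\mu-\omega$, $\theta_\infty=2\mu-2\omega_1$ and the invariant $\sigma=2\mu+2\omega_1$ of (\ref{Bulk_Mexp:A}), together with the coefficient $s$ of (\ref{Bulk_s:A}) supplied by Theorem \ref{Bulk_x=0Mdata}, and alongside them the Stokes multipliers (\ref{iReg_Stokes}) and the hatted monodromy matrices (\ref{iReg_M0})--(\ref{iReg_Minfty}) of the preceding Corollary. A structural feature that organises the whole calculation is the separatrix relation $\theta_0-\theta_1-\sigma=0$, already responsible for the vanishing of the $x^{1-\sigma}$ term in Theorem \ref{Bulk_x=0Mdata}: substituted into (\ref{PV_M0}) and (\ref{PV_M1}) it annihilates the lower-left entries of $DM_0D^{-1}$ and $DM_1D^{-1}$, so that both conjugated matrices become upper triangular. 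The one genuinely free quantity is the gauge constant $r$, and I would pin it down at the outset by requiring the \PV Stokes multipliers (\ref{PV_StokesM}) to agree with the limit-transition values (\ref{iReg_Stokes}); since $\frac{1}{2}(\theta_\infty\pm\sigma)=2\mu,-2\omega_1$ this forces $r=-2\mu$, whence $s_1=\hat s_0$ and $s_2=\hat s_1$.

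With $r$ determined, $M_\infty$ follows directly from (\ref{PV_MInf}), $M_\infty=S_2e^{\pi i\theta_\infty\sigma_3}S_1$: the Gamma functions enter solely through the Stokes multipliers, and the reflection formula $\Gamma(z)\Gamma(1-z)=\pi/\sin\pi z$ converts its $(1,1)$ entry $e^{\pi i\theta_\infty}+s_1s_2e^{-\pi i\theta_\infty}$ into $2\cos\pi(2\mu+2\omega_1)-e^{-\pi i(2\mu-2\omega_1)}$, reproducing (\ref{PV_NmInf}). For $M_0$ and $M_1$ I would pass between the two conventions by means of (\ref{PV_Mxfm}), writing $M_1=S_1^{-1}\hat M_1S_1$, $M_\infty=S_1^{-1}\hat M_\infty S_1$ and $M_0=M_1^{-1}S_1^{-1}\hat M_0S_1M_1$, and feeding in the explicit hatted matrices (\ref{iReg_M0})--(\ref{iReg_Minfty}); because $S_1=\hat S_0$ is lower triangular with single entry $\hat s_0$, this conjugation leaves the upper-right entries untouched and recombines the rest, so that the ratios $\Gamma(1+2\mu)/\Gamma(2\omega_1)$ already carried by the hatted matrices survive into (\ref{PV_Nm0}) and (\ref{PV_Nm1}). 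Equivalently one may substitute the data straight into (\ref{PV_M0})--(\ref{PV_D}) and afterwards apply (\ref{PV_Mxfm}) to bring the result into the stated basis; either way the repeated use of half-angle identities and the collapse of sums such as $e^{-\pi i\omega_1}+2i\sin2\pi\omega_1\,e^{\pi i\omega_1}=e^{3\pi i\omega_1}$ is what does the actual work.

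The difficulty here lies entirely in the bookkeeping rather than in any idea. The single essential parameter $\xi^*$ enters linearly and must be carried faithfully through each conjugation, and the trigonometric output of the Jimbo/Andreev--Kitaev parameterisation has to be reconciled, through the reflection formula, with the Gamma-function form of the stated matrices. I would therefore verify each entry against $\det M_\nu=1$ and ${\rm tr}\,M_\nu=2\cos\pi\theta_\nu$, and then test the whole triple against the cyclic relation (\ref{PV_cyclic}), $M_\infty M_1M_0=I$. This last is the decisive consistency requirement: it fixes the relative normalisation of the three matrices and confirms that (\ref{iReg_M0})--(\ref{iReg_Minfty}) and (\ref{PV_Nm0})--(\ref{PV_NmInf}) describe one and the same \PV monodromy in the two conventions related by (\ref{PV_Mxfm}).
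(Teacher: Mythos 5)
Your proposal is correct and assembles exactly the ingredients of the paper's proof, but with the roles of derivation and verification interchanged. The paper obtains $M_{0\rm V}$ and $M_{1\rm V}$ by direct substitution of the Theorem \ref{Bulk_x=0Mdata} data into the parameterisation (\ref{PV_M0})--(\ref{PV_D}) --- with $r=-2\mu$ fixed, exactly as you do, by matching (\ref{PV_StokesM}) against (\ref{iReg_Stokes}) --- computes $M_{\infty\rm V}$ from (\ref{PV_MInf}), and only then invokes (\ref{PV_Mxfm}) as a consistency check against the hatted matrices; your primary route runs (\ref{PV_Mxfm}) in the opposite direction, extracting $M_{1\rm V}=S_1^{-1}\hat{M}_{1{\V}}S_1$ and $M_{0\rm V}=M_{1\rm V}^{-1}S_1^{-1}\hat{M}_{0{\V}}S_1M_{1\rm V}$ from the limit-transition Corollary. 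The two routes are logically equivalent, and yours buys the pleasant observation that conjugation by the lower-unipotent $S_1$ preserves the $(1,2)$ entry, so the $(1,2)$ entry of $\hat{M}_{1{\V}}$ passes unchanged into (\ref{PV_Nm1}), as is visible on comparison with (\ref{iReg_M1}). Your remark that $\theta_0-\theta_1-\sigma=0$ kills the factor $\sin\frac{\pi}{2}(\theta_1-\theta_0+\sigma)$ and so triangularises $DM_0D^{-1}$ and $DM_1D^{-1}$ is a genuine structural addition that the paper leaves implicit. Two small corrections: in your ``equivalently'' route no subsequent application of (\ref{PV_Mxfm}) is needed, since (\ref{PV_M0})--(\ref{PV_D}) already yield the unhatted matrices in the stated gauge; and the trace check $ {\rm tr}\,M_{\nu}=2\cos\pi\theta_{\nu} $ is valid only at the regular points $ \nu=0,1 $ --- at the irregular point the cyclic relation (\ref{PV_cyclic}) gives $ {\rm tr}\,M_{\infty\rm V}={\rm tr}(M_{0\rm V}M_{1\rm V})=2\cos\pi\sigma=2\cos\pi(2\mu+2\omega_1) $, not $ 2\cos\pi\theta_{\infty} $, which is indeed what the $(1,1)$ plus $(2,2)$ entries of (\ref{PV_NmInf}) exhibit.
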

\begin{proof}
The first two matrices are computed from (\ref{PV_M0}) and (\ref{PV_M1}) after noting that a
comparison of the Stokes multipliers (\ref{PV_StokesM}) and (\ref{iReg_Stokes})
obliges us to set $ r=-2\mu $. The third matrix is computed using (\ref{PV_MInf}).
As a check one can verify directly that both sets of monodromy matrices
(\ref{iReg_M0},\ref{iReg_M1},\ref{iReg_Minfty}) and (\ref{PV_Nm0},\ref{PV_Nm1},\ref{PV_NmInf}) 
are related by the transformations of (\ref{PV_Mxfm}).
\end{proof}
}
\end{minipage} 
\end{sideways}
\vfill\eject

In the discussion of the asymptotics of the \PV $\sigma$-function as 
$ t \to \pm\infty $ given in \cite{AK_2000} the following parameter 
\begin{equation}
   \beta_0 = \frac{1}{2\pi i}\ln\left\{
             \xi^*[1-e^{\pi i(-2\mu+2\omega_1)}(1-\xi^*)] \right\}
\end{equation}
was found to be crucial. We note that when $ \xi^*=1 $ both $ \beta_0 $ and the 
upper left element of $ M_0 $ both vanish.
A comparison of our monodromy matrices, evaluated at $ \xi^*=1 $, with those
discussed in \cite{AK_1997} where a solution to one of the connection problems 
was reported reveals that our case is precisely the case of the 
{\em lower truncated solution}. In terms of the parameters used in that work we 
have 
\begin{align}
   i\hat{u} &= 2^{2(2\mu-2\omega_1)}e^{-\pi i(\mu-\bar{\omega})}
               \frac{\Gamma(1+2\mu)}{\Gamma(2\omega_1)} , \\
   i\hat{v} &= \sqrt{\frac{2}{\pi}}e^{\pi i(\mu+\omega_1)}\cos\pi(\mu-\bar{\omega}) .
\end{align}
This means that the $ \sigma$-function has the following asymptotic expansion
\begin{equation}
  \zeta(s) \mathop{\sim}\limits_{s \to -i\infty}
  \zeta_0(s)-ie^{\pi i(\mu+\omega_1)}\cos\pi(\mu-\bar{\omega})
              \sqrt{\frac{|s|}{4\pi}}e^{-is/2}
\end{equation}
in the sector $ -\pi \leq {\rm arg}(s) \leq 0 $. 
Here $ \zeta_0(s) $ is the formal, algebraic expansion
\begin{multline}
  \zeta_0(s) \mathop{\sim}\limits_{|s| \to \infty}
  \frac{s^2}{16}+(\mu-\frac{1}{2}i\omega_2)s \\
  + 4\mu^2-2\mu i\omega_2+\omega_1^2+\omega_2^2-\frac{1}{4}
  - 2i\omega_2(4\mu^2-4\omega_1^2)s^{-1} \\
  + \left[ 16\mu^4-8(4(\omega_1^2+\omega_2^2)+1)\mu^2-16\omega_1^2\omega_2^2
           +(4\omega_1^2-1)(4\omega_1^2-4\omega_2^2-1) \right]s^{-2}
  + {\rm O}(s^{-3}) . 
\end{multline}
However this is not the physically interesting asymptotic expansion of 
$ s \to +i\infty $.

\section{Conclusions}
\setcounter{equation}{0}
An especially interesting problem that remains outstanding is the connection 
formula relating the $ x\to 0 $ behaviour of the \PV $\tau$-function 
arising in the bulk scaling of the spectrum singularity ensemble $ \mathcal{A}(x) $, as
given in (\ref{Bulk_tau}), to the $ x\to +i\infty $ behaviour.
The bulk scaling limit of the partition function for the Dyson CUE (\ref{CUE}) is 
the generating function for the gap probability, which has the Fredholm determinant
formula
\begin{equation}
  E((-t,t);\xi) = \det(\mathbb{I}-\xi\mathbb{K}|_{L^2(-t,t)}) ,
\end{equation}
where the integral operator $ \mathbb{K} $ has a kernel with the sine kernel form
\begin{equation}
  K(x,y) = \frac{\sin\pi(x-y)}{\pi(x-y)} .
\end{equation}
This is a special case of our spectral average $ \mathcal{A}(x) $ and the precise 
relation is
\begin{equation}
  E((-t,t);\xi) = \mathcal{A}(4it;\mu = 0,\omega_1 = 0,\omega_2 = 0;\xi) .
\end{equation}
The large $ t\to \infty $ asymptotic expansion of the gap probability 
$ E((-t,t);\xi=1) $ is known to be
\begin{equation}
  E((-t,t);\xi=1) \sim e^{3\zeta'(-1)+\frac{1}{12}\log 2}t^{-1/4}e^{-\frac{1}{2}t^2+{\rm o}(1)} ,
\end{equation}
or alternatively as
\begin{equation}
  t\frac{d}{dt}\log E((-t,t);\xi=1) \sim
  -t^2-\frac{1}{4}-\frac{1}{16t^2}-\frac{5}{32t^4}+\ldots .
\end{equation}
For $ 0 < \xi < 1 $ this result becomes
\begin{equation}
   t\frac{d}{dt}\log E((-t,t);\xi) \sim
   \frac{2t}{\pi}\log(1-\xi) + \frac{\log^2(1-\xi)}{2\pi^2} + {\rm o}(1) .
\end{equation}
This asymptotic problem has been the subject of intense and continuing study
\cite{Dy_1976,Dy_1995,BTW_1992,BTW_1992a,Sh_1995,DIZ_1997,No_2001,Eh_2006,DIKZ_2006}.
The question we pose then is - what is the 
generalisation of this asymptotic result valid for generic $ \omega_1, \omega_2, \mu $
and that is uniformly valid for $ |1-\xi| < \delta $ with some finite $ \delta> 0 $?

Our results do not strictly apply for the situation of moments or singularities that are
negative $ \Re(\mu),\Re(\omega_1) < -1/2 $ and $ |t| \to 1 $, however the present
results may carry over to this case. This case is quite relevant in the 
context of studies concerning the averages of the characteristic polynomial in the 
CUE with negative integer powers\cite{FK_2004}
where such averages are employed to make conjectures
concerning the averages of powers of the zeta or $L$-functions on their critical
lines.

\section*{Acknowledgements}
This work was supported by the Australian Research Council.

\bibliographystyle{plain}
\bibliography{moment,random_matrices,nonlinear}

\end{document}